\newcommand{\tabitem}{~~\llap{\textbullet}~~}
\newtheorem{theorem}{Theorem}[section]
\newtheorem{lemma}[theorem]{Lemma}
\newtheorem{proposition}[theorem]{Proposition}
\newtheorem{corollary}[theorem]{Corollary}
\theoremstyle{definition}
\newtheorem{definition}[theorem]{Definition}
\theoremstyle{remark}
\newtheorem{remark}[theorem]{Remark}
\numberwithin{equation}{theorem}
\newcommand{\Pic}{\text{Pic}} %Picard group
\newcommand{\Nef}{\text{Nef}} %Effective cone
\newcommand{\Eff}{\text{Eff}}
\begin{document}
\title{Algebraic hyperbolicity for surfaces in smooth projective toric threefolds with Picard rank 2 and 3}
\author{Sharon Robins}
\address{Department of Mathematics, Simon Fraser University, 8888 University Drive, Burnaby BC V5A1S6, Canada  }
\email{srobins@sfu.ca}

\begin{abstract}
Algebraic hyperbolicity serves as a bridge between differential geometry and algebraic geometry. Generally, it is difficult to show that a given projective variety is algebraically hyperbolic. However, it was established recently that a very general surface of degree at least five in projective space is algebraically hyperbolic. 
We are interested in generalizing the study of surfaces in projective space to surfaces in smooth projective toric threefolds with Picard rank 2 or 3. Following Kleinschmidt and Batyrev, we explore the combinatorial description of smooth projective toric threefolds with Picard rank 2 and 3. We then use Haase and Ilten’s method of finding algebraically hyperbolic surfaces in toric threefolds. As a result, we determine many algebraically hyperbolic surfaces in each of these varieties. 
\end{abstract}

\maketitle

\section{Introduction}

\subsection{Background}
Hyperbolicity has long been a topic of interest in the study of differential geometry. A smooth complex projective variety $X$ is said to be Brody hyperbolic if there is no nonconstant holomorphic map $f : \mathbb{C} \to X$ \cite{MR3735863}. The hyperbolicity of curves can be completely determined by their geometric genus. A curve is hyperbolic if and only if the geometric genus is at least 2. However, the determination of hyperbolicity becomes complicated as we go to higher dimensions. 

Demailly \cite{MR1492539} introduced the notion of algebraic hyperbolicity to improve the study of hyperbolicity. According to Demailly, a smooth complex projective variety $X$ is algebraically hyperbolic if there exists an ample divisor $H$ on $X$ and some $\epsilon >0$ such that any curve $C\subset X$ of geometric genus $g(C)$ satisfies
\begin{align*}
    2g(C)-2\geq \epsilon(C\cdot H).
\end{align*}
The notion of algebraic hyperbolicity was extended to singular projective varieties by Javanpeykar and Kamenova in \cite{JK20}. Furthermore, there are studies of algebraic hyperbolicity in pseudo-setting, see e.g \cite{JX20}.

Compared to hyperbolicity, it is easier to determine the algebraic hyperbolicity of varieties. Moreover, hyperbolicity and algebraic hyperbolicity are equivalent by Demailly’s conjecture that is a consequence of Green-Griffiths-Lang conjecture \cite{Lan86}.

It was established recently that a very general surface of degree at least five in $\mathbb{P}^3$ is algebraically hyperbolic \cite{MR1258918}, \cite{MR3949983}. In \cite[Theorem 1.4]{1903.02681} Chiantini and Lopez provide algebraically hyperbolic surfaces in $\mathbb{P}^4$. Besides that, Haase and Ilten \cite{1903.02681} initiated the study of algebraically hyperbolic surfaces in toric threefolds. They  ensure the existence of hyperbolic surfaces of higher degrees \cite[Theorem 1.2]{1903.02681}. Further to that, Coskun and Riedl \cite{coskun2019algebraic} expanded their previous work on $\mathbb{P}^3$ to any threefold $X$ admitting an action with an open orbit by some algebraic group $G$. Through a deeper analysis of some special cases they complete the open cases posted by Haase and Ilten and provide a  classification of algebraically hyperbolic surfaces in $\mathbb{P}^2\times \mathbb{P}^1, \mathbb{P}^1\times \mathbb{P}^1\times \mathbb{P}^1, \mathbb{P}(\mathcal{O}_{\mathbb{P}^1}\oplus \mathcal{O}_{\mathbb{P}^1}(r))\times \mathbb{P}^1 $ and the blow-up of $\mathbb{P}^3$ at a point.

\subsection{Approach and results}
In this paper, we study the algebraic hyperbolicity of surfaces in  smooth projective toric threefolds with Picard rank 2 and 3 by following the methods in \cite{1903.02681}. A description of all smooth projective threefolds with Picard rank 2 and 3 is given in Section 3. For each of these varieties first we will analyze the geometric genus of curves in the boundary using Lemma \ref{boundarycurves}. For explicit results see Lemma \ref{lemma7.1}. Also we will provide a configuration of divisors with connected sections for each of these varieties (Lemma \ref{lemma7.2}). We then combine the genus of curves in the boundary, the collection of divisors with connected sections, and Theorem \ref{Theorem5.14}.
Consequently, we almost completely classify the algebraically hyperbolic surfaces in all smooth projective toric varieties with Picard rank 2 and 3 (Theorem \ref{first}).

\subsection{Organization} In Section 2.1, we fix notation and recall basic facts about toric varieties. We discuss our main tools to show algebraic hyperbolicity in Section 2.2. We recall the classification of all smooth projective threefolds with Picard rank 2 and 3 in Section 3. Finally, we state our main results in Section 4 and in Section 5 we prove the results.

\textbf{Acknowledgements}: 
I am thankful to Nathan Ilten for his constant encouragement and motivating this paper. I am also grateful to Nathan Ilten and anonymous referee for diligent reading of earlier versions of this paper and useful comments.  

\section{Preliminaries}

\subsection{Toric varieties}

We start by reviewing toric varieties. For general facts about toric varieties, see \cite{MR2810322} or \cite{MR1234037}. Let $N\cong \mathbb{Z}^n$ be a lattice of rank $n$, and $M=\text{Hom}(N,\mathbb{Z})$ be the dual lattice. The pairing between $m\in M$ and $u \in N$ is denoted by $\langle m,u \rangle \in \mathbb{Z}$. Given a fan $\Sigma$ in $N_{\mathbb{R}}$, we can associate a toric variety $X_{\Sigma}$. The properties of a fan $\Sigma$ gives a lot of information about the geometry of the toric variety $X_{\Sigma}$. For example, $X_\Sigma$ is smooth or complete if and only if $\Sigma$ is smooth or complete respectively \cite[Theorem 3.19]{MR2810322}. 

We always assume a fan $\Sigma$ is smooth and complete. We denote the rays of $\Sigma$ by $\Sigma(1)$. Given a ray $\rho \in \Sigma(1)$, we denote $u_{\rho}$ the primitive generator of $\rho$. We denote the divisor corresponding to $\rho$ by $D_{\rho}$. Any torus invariant divisor $D$ may be written uniquely as a sum $D=\Sigma_{\rho \in \Sigma(1)}a_{\rho}D_{\rho}$. To $D$, we associate the polytope
\[P(D)=\{m\in M_{\mathbb{R}}: \langle m,u_{\rho} \rangle \geq -a_{\rho} \text{~for all~} \rho\in \Sigma(1)\}.\]

 A support function on $\Sigma$ is a function $\varphi:\mathbb{R}^n \rightarrow \mathbb{R} $ that is linear on each cone of $\Sigma$ and $\mathbb{Z}$ valued on $\mathbb{Z}^n$. To any divisor $D=\Sigma_{\rho \in \Sigma(1)}a_{\rho}D_{\rho}$, we define a function $\varphi_{D}$ such that
\begin{equation*}
    \varphi_{D}(u_{\rho})=-a_{\rho} \text{ for all } \rho\in \Sigma(1).
\end{equation*}
It can be extended uniquely to a support function of $\Sigma$. Furthermore, the support function $\varphi_D$ is convex if and only if $D$ is nef. Also note that being nef is the same as basepoint free for toric varieties. If $E,E'$ are two nef divisors then 
\[P(E)+P(E')=P(E+E').\]
We say a pair of nef divisors $(E,E')$ has integer decomposition property (IDP) if 
\[(P(E)\cap \mathbb{Z}^n)+(P(E')\cap \mathbb{Z}^n)=P(E+E')\cap \mathbb{Z}^n.\]

In this paper, we are only interested in smooth complete toric threefolds. It is known that $\mathbb{P}^3$ is the only smooth complete toric threefold with Picard rank 1. We will give the description of all smooth complete toric threefolds with Picard rank 2 and 3 in Section 3. For smooth complete toric threefolds of at most Picard rank 3 we have the following result on IDP.

\begin{theorem} [{\cite[Corollary 4.2]{MR2551605}},{\cite[Theorem 1.4]{robins2021integer}}]\label{IDPtheorem}
Let $X_{\Sigma}$ be a smooth complete toric threefold of at most Picard rank 3 and let $E,E'$ be two nef divisor on $X_\Sigma$. Then $(E,E')$ has the IDP.
\end{theorem}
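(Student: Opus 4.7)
The plan is a case-by-case analysis guided by the Kleinschmidt--Batyrev classification of smooth complete toric threefolds with Picard rank at most $3$. For Picard rank $1$ the variety is $\mathbb{P}^3$, where $P(E)$ and $P(E')$ are scaled standard $3$-simplices, and the IDP for their Minkowski sum is an elementary fact about simplices. For Picard rank $2$ the variety is a projective bundle over $\mathbb{P}^2$ or $\mathbb{P}^1$ by Kleinschmidt's theorem, and one exploits the bundle structure to reduce the IDP question to lower-dimensional Minkowski-sum problems on the base and fiber; this is the content of \cite{MR2551605}. So the substantive case is Picard rank $3$.

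For each of the Batyrev families of smooth projective toric threefolds of Picard rank $3$, I would first write down the primitive ray generators $u_\rho$ and an explicit parametrization of the nef cone, so that a general nef divisor $D = \sum_\rho a_\rho D_\rho$ is described by a short list of inequalities on the $a_\rho$ encoding convexity of $\varphi_D$. From this data I would describe the vertices of $P(D)$ as the intersection points of facet hyperplanes $\langle m, u_\rho \rangle = -a_\rho$ indexed by the maximal cones of $\Sigma$. The target statement to verify is then: for any nef $E, E'$, every lattice point of $P(E+E') = P(E) + P(E')$ decomposes as a sum of a lattice point of $P(E)$ and a lattice point of $P(E')$.

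A natural approach is to slice all three polytopes by hyperplanes normal to some fixed direction (fixing one coordinate of $m$), reducing the $3$-dimensional IDP question to a controlled family of planar ones; lattice polygons in smooth toric surfaces enjoy strong normality properties, and a slice-wise Minkowski decomposition can be lifted back to three dimensions. The main obstacle is uniformity: with many families in Batyrev's list, one wants to avoid a bespoke computation for each, so the real art is in choosing a slicing direction (typically coming from a fiber subfan associated with a toric projection) that handles an entire family at once, and then carefully treating the boundary slices where one of $P(E)$ or $P(E')$ degenerates to a polygon or segment while $P(E+E')$ remains full-dimensional. Consolidating these ingredients across all Picard rank $3$ families is the crux of \cite{robins2021integer}.
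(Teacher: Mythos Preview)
The paper does not supply its own proof of this theorem: the statement is presented with attributions to \cite[Corollary~4.2]{MR2551605} and \cite[Theorem~1.4]{robins2021integer} and is used as a black box thereafter. There is therefore nothing in the paper to compare your argument against; your write-up is a plausible high-level outline of how the cited results are organized (Picard rank $\le 2$ handled by the projective-bundle structure as in \cite{MR2551605}, Picard rank $3$ handled family-by-family via the Batyrev classification as in \cite{robins2021integer}), but it is a proof \emph{sketch}, not a proof. If the goal is simply to match the paper, a one-line citation suffices; if the goal is an independent proof, the slicing heuristic you describe would need to be made precise for each Batyrev family, which is exactly the work carried out in \cite{robins2021integer}.
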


\subsection{Algebraic hyperbolicity}

In this section, we briefly discuss the tools developed by Haase and Ilten \cite{1903.02681} to determine the algebraically hyperbolic surfaces in toric threefolds. Haase and Ilten define the property that a configuration of divisors has connected sections \cite[Definition 3.3]{1903.02681}. We don't give the definition here. Instead, we will discuss a criterion which guarantees a pair of divisors has connected sections (Proposition  \ref{connectedsection}).  

We will first translate the information of the fan into a toric ideal. For the rest of the paper, we always assume $X_{\Sigma}$ be a smooth complete toric threefold. Then we have the short exact sequence %(Lemma \ref{lemma3.4})
\begin{equation}\label{exact2}
     0{\longrightarrow} M \overset{i}\longrightarrow \mathbb{Z}^{\Sigma[1]} \overset{\pi}\longrightarrow \text{Pic}(X_{\Sigma}) \rightarrow 0,
\end{equation}
where the $\rho$ coordinate of the first map is given by $m\mapsto \langle m,u_{\rho} \rangle$. 
After choosing a basis $\{e_1^{*},e_2^{*},e_3^{*}\}$ for $M$ and a basis for $\text{Pic}(X_{\Sigma})$, we get the short exact sequence

\begin{equation}\label{exact1}
    0{\longrightarrow} \mathbb{Z}^3 \overset{i}\longrightarrow \mathbb{Z}^r \overset{\pi}\longrightarrow \mathbb{Z}^{k} \rightarrow 0.
\end{equation}
Note that $k=r-3$.  The cokernel of this exact sequence can be identified by the Picard group. We can represent the map $i$ by a matrix $A$ and $\pi$ by a matrix $B=(b_{ij})$. Consider the  $\mathbb{C}$-algebra homomorphism given by
\begin{align*}
   \alpha: \mathbb{C}[x_1,\cdots,x_r]&\rightarrow \mathbb{C}[y_1,y_1^{-1},
   \ldots,y_{k},y_k^{-1}]\\
    x_i &\mapsto y_1^{b_{1i}}\dotsm y_{k}^{b_{ki}}.
\end{align*}
We define the toric ideal $I_B=$ ker$(\alpha)$.

\begin{lemma}[{\cite[Proposition 1.1.9]{MR2810322}}]
The toric ideal $I_B$ associated with the matrix $B$ is the ideal in $\mathbb{C}[x_1,\ldots,x_r]$ generated by binomials $x^{v^{+}}-x^{v^{-}}$ for $v^{+},v^{-}\in \mathbb{Z}^{r}_{\geq0}$ with $Bv^{+}=Bv^{-}$. 
\end{lemma}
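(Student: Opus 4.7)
The plan is to prove the two containments separately; the real work is in the $\supseteq$ direction (which is immediate) and then the $\subseteq$ direction, where we must show that every kernel element can be expressed as a $\mathbb{C}$-linear combination of the prescribed binomials.

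First I would observe that for any monomial $x^{v} = x_1^{v_1}\cdots x_r^{v_r}$ with $v \in \mathbb{Z}^r_{\geq 0}$, the definition of $\alpha$ gives $\alpha(x^v) = y^{Bv}$. Hence for $v^+, v^- \in \mathbb{Z}^r_{\geq 0}$ with $Bv^+ = Bv^-$, we have $\alpha(x^{v^+} - x^{v^-}) = y^{Bv^+} - y^{Bv^-} = 0$, so all such binomials lie in $I_B$. This establishes that the ideal $J$ generated by these binomials satisfies $J \subseteq I_B$.

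For the reverse containment, I would take an arbitrary $f \in I_B$ and write it as $f = \sum_{v} c_v x^v$ with $c_v \in \mathbb{C}$, summed over finitely many $v \in \mathbb{Z}^r_{\geq 0}$. Applying $\alpha$ yields $\sum_v c_v y^{Bv} = 0$ in the Laurent polynomial ring. Grouping monomials by the value of $Bv$, for each $w \in \mathbb{Z}^k$ let $S_w = \{v : c_v \neq 0,\ Bv = w\}$; since the Laurent monomials $\{y^w\}_{w \in \mathbb{Z}^k}$ are linearly independent over $\mathbb{C}$, we must have
\begin{equation*}
    \sum_{v \in S_w} c_v = 0 \quad \text{for every } w \in \mathbb{Z}^k.
\end{equation*}
For each nonempty $S_w$, I would fix a distinguished element $v_w \in S_w$ and use the vanishing of the coefficient sum to rewrite
\begin{equation*}
    \sum_{v \in S_w} c_v x^v = \sum_{v \in S_w} c_v\bigl(x^v - x^{v_w}\bigr).
\end{equation*}
Each $x^v - x^{v_w}$ with $v \in S_w$ satisfies $Bv = Bv_w = w$, hence is a binomial of the stated form and belongs to $J$. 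Summing over $w$ expresses $f$ as a $\mathbb{C}$-linear combination of such binomials, so $f \in J$ and $I_B \subseteq J$.

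The argument is essentially bookkeeping: no step is a serious obstacle, but the only subtle point is the grouping step, which requires the linear independence of $\{y^w\}$ in the Laurent polynomial ring to force the coefficient sums within each fiber of $B$ to vanish. Once that is in hand, the rewriting trick $\sum c_v x^v = \sum c_v (x^v - x^{v_w})$ — valid precisely because $\sum c_v = 0$ — finishes the proof.
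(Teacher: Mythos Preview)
Your proof is correct and is the standard argument for this fact. Note, however, that the paper does not supply its own proof of this lemma: it is stated with a citation to \cite[Proposition 1.1.9]{MR2810322} and left unproved in the text. Your argument matches the classical proof found in that reference, so there is nothing to compare against here beyond confirming that your reasoning is sound --- which it is.
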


We identify a vector $v\in$ ker($B)\cap \mathbb{Z}^r$ with the binomial $x^{v^{+}}-x^{v^{-}}$ where 
$v_i^{+}=$max($v_i,0$) and $v_i^{-}=-$min($v_i,0$). 

\begin{definition}
We say that a subset $\mathcal{G}\subset \text{ker}(B)\cap \mathbb{Z}^r$ is a Markov basis if the corresponding binomials generate $I_B$.
\end{definition}

\begin{proposition}[{\cite[Proposition 4.5]{1903.02681}}]\label{connectedsection}
Let $(E,E')$ be an IDP pair of divisors on $X_{\Sigma}$. Set
\[\mathcal{G}:=\left(i\left(P(E')\right)\cap \mathbb{Z}^r\right)-\left(i\left(P(E')\right)\cap \mathbb{Z}^r\right).\]
If $\mathcal{G}$ is a Markov basis for $I_B$, then the configuration $(E+E',E)$ has connected sections.
\end{proposition}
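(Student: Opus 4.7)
The plan is to unpack the definition of ``connected sections'' from \cite[Definition 3.3]{1903.02681} and reduce the claim to a purely combinatorial statement about the toric ideal $I_B$. For a nef toric divisor $D$, a $T$-eigenbasis of $H^0(X_{\Sigma},\mathcal{O}(D))$ is indexed by $P(D)\cap M$; under the map $i$ these lattice points form a fixed coset of $\ker B$ inside $\mathbb{Z}^r$. A factorization of a monomial section of $E+E'$ as (section of $E$)$\cdot$(section of $E'$) then corresponds to writing a point $w \in i(P(E+E'))\cap \mathbb{Z}^r$ as $w = p+q$ with $p \in i(P(E))\cap \mathbb{Z}^r$ and $q \in i(P(E'))\cap \mathbb{Z}^r$. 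The property ``has connected sections'' should translate into the connectedness of the graph whose vertices are such factorizations and whose edges are local moves.

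First, I would invoke the IDP hypothesis on $(E,E')$ to guarantee that every $w \in i(P(E+E'))\cap \mathbb{Z}^r$ admits at least one factorization, so no vertex set in this graph is empty. Second, I would observe that any two factorizations $(p,q)$ and $(p',q')$ of the same $w$ differ by $q'-q = p-p' \in \ker(B)\cap \mathbb{Z}^r$, where $q'-q$ lies in $i(P(E'))-i(P(E'))$, i.e. in $\mathcal{G}$. Under the correspondence $v \leftrightarrow x^{v^+}-x^{v^-}$, such a difference is exactly a binomial in $I_B$. Third, the Markov basis hypothesis means that every element of $\ker(B)\cap \mathbb{Z}^r$ decomposes as a sum of elements of $\mathcal{G}$, and I would apply the standard Diaconis--Sturmfels argument: this algebraic generation condition is equivalent to the ability to move between any two decompositions of $w$ by a sequence of local $\mathcal{G}$-moves.

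The main obstacle will be verifying that the intermediate steps in such a chain remain valid factorizations, i.e. that after applying a $\mathcal{G}$-move the modified pair still has $p \in i(P(E))\cap \mathbb{Z}^r$ and $q \in i(P(E'))\cap \mathbb{Z}^r$. This is precisely why the $\mathcal{G}$-moves are restricted to differences of points of $i(P(E'))$: keeping the $E'$-part inside $P(E')$ is automatic, while the complementary change in the $E$-part must be controlled by IDP at every step. I expect this verification to go through by induction on the length of the Markov relation, using IDP to realize each intermediate $w$ as a decomposition whose first coordinate genuinely lies in $P(E)$. This bounded-moves bookkeeping is the heart of Haase--Ilten's framework and is where the IDP and Markov basis hypotheses interact most delicately.
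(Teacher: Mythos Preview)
The paper does not prove this proposition at all; it is quoted verbatim from Haase--Ilten \cite[Proposition~4.5]{1903.02681} and used as a black box. So there is no in-paper argument to compare your proposal against.

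Your overall plan---translate sections to lattice points in the $B$-fiber, use IDP for existence of factorizations, and invoke the Diaconis--Sturmfels characterization of Markov bases for connectivity---is exactly the machinery behind Haase--Ilten's original proof, so the strategy is sound. Two places in your write-up are muddled, though. First, your ``second'' observation (that two factorizations of a \emph{fixed} $w$ differ by an element of $\mathcal{G}$) is trivially true by the definition of $\mathcal{G}$ and is not where the Markov hypothesis is used; the Markov basis enters via Diaconis--Sturmfels to connect \emph{different} sections $w,w'$ in the fiber $B^{-1}([E+E'])\cap\mathbb{Z}_{\ge 0}^r$, which coincides with (the shift of) $i(P(E+E'))\cap\mathbb{Z}^r$. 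Second, your claim that ``keeping the $E'$-part inside $P(E')$ is automatic'' while IDP handles the $E$-part has the roles slightly crossed: what Diaconis--Sturmfels guarantees automatically is that each intermediate point stays in $\mathbb{Z}_{\ge 0}^r$, i.e.\ in $P(E+E')$; the IDP hypothesis is then what lets you split each such intermediate section into an $E$-part and an $E'$-part at all. Once you straighten out which hypothesis does which job, the argument goes through as in the cited source.
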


\begin{theorem}[{\cite[Theorem 3.6]{1903.02681}}]\label{Theorem5.14}
Let $(D,E)$ be non-trivial basepoint free torus invariant divisors on a smooth complete toric threefold $X_{\Sigma}$. Assume that this configuration has connected sections and that $D$ is big. Let $S\in |D|$ be a very general surface and $C\subset S$ is any curve that is not contained in the toric boundary of $X_{\Sigma}$. Then the geometric genus $g$ of $C$ satisfies
\[2g-2\geq C\cdot(E+K_{X_{\Sigma}}).\]
\end{theorem}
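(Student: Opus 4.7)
The plan is to adapt the Voisin--Clemens strategy for bounding the geometric genus of curves on very general surfaces to the toric threefold setting, using the connected-sections hypothesis as the substitute for the ``abundance of defining polynomials'' argument available in projective space. The argument is deformation-theoretic: because $S$ is very general in $|D|$, any curve $C \subset S$ must move in a family as $S$ varies, and the size of this family is controlled by the data produced by the connected-sections hypothesis. This ultimately bounds $C \cdot C|_S$ from below, which then gives the genus inequality via adjunction.

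First, I would set up the incidence variety. Let $\pi: \mathcal{S} \to B = |D|$ denote the universal family of surfaces, and let $T$ be an irreducible component of the relative Hilbert scheme of curves on $\mathcal{S}$ of the numerical class of $C$ whose image in $B$ is dense. Such a $T$ exists by a standard countability argument: if $S$ is very general, then no curve class on $S$ can be ``isolated'' to $S$ alone. At a point $t \in T$ corresponding to a pair $(S, C)$, the tangent space to $T$ maps into $H^0(C, N_{C/X_{\Sigma}})$, and one has the tangent--normal sequence
\[
0 \to N_{C/S} \to N_{C/X_{\Sigma}} \to N_{S/X_{\Sigma}}|_C \to 0,
\]
with $N_{S/X_{\Sigma}}|_C = \mathcal{O}_C(D)$. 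This lets me compare deformations of $C$ inside $S$ with deformations inside $X_\Sigma$.

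Second, I would exploit the connected-sections hypothesis for $(D, E)$ to produce an explicit subspace of $H^0(C, N_{C/X_{\Sigma}})$ of global origin. The idea is that a Markov basis for $I_B$ as in Proposition \ref{connectedsection} encodes enough binomial relations among monomials of $\mathcal{O}_{X_\Sigma}(E)$ and $\mathcal{O}_{X_\Sigma}(D+E)$ to build log-derivations of $X_\Sigma$ along the toric boundary whose restrictions to $C$ produce many sections of an appropriately twisted normal sheaf on $C$. The hypotheses that $C \not\subset \partial X_\Sigma$ and that $S$ is very general guarantee that the resulting sections are not identically zero on $C$. Tracking the twists carefully, the output is a lower bound
\[
C \cdot C|_S \;\geq\; C \cdot (E - D).
\]

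Third, I would combine this with adjunction on $S$. If $\widetilde{C}$ denotes the normalization of $C$, then using $K_S = (K_{X_\Sigma} + D)|_S$,
\[
2 g(\widetilde{C}) - 2 \;\geq\; 2 p_a(C) - 2 \;=\; C \cdot (C + K_S)|_S \;=\; C \cdot C|_S + C \cdot (K_{X_\Sigma} + D).
\]
Substituting the bound from the previous step gives
\[
2 g(\widetilde{C}) - 2 \;\geq\; C \cdot (E - D) + C \cdot (K_{X_\Sigma} + D) \;=\; C \cdot (E + K_{X_\Sigma}),
\]
which is the desired inequality. The bigness of $D$ enters when I need $S$ to deform enough that the Hilbert scheme argument of the first step applies.

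The main obstacle is the second step: making precise how a Markov basis for $I_B$ concretely produces non-trivial first-order deformations of $C$ inside $X_\Sigma$ (rather than purely algebraic data on the Cox ring), and verifying the correct twist so that the bound $C \cdot C|_S \geq C \cdot (E-D)$ emerges with the right sign. This requires a careful local analysis on the open torus orbits together with an interpolation along the boundary divisors, using that ``connected sections'' is exactly engineered to glue local lifts into global ones. The remaining ingredients---Hilbert-scheme deformation theory and adjunction---are standard and should fall into place once the construction of sections is in hand.
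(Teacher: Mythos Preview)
This theorem is not proved in the present paper; it is quoted verbatim from Haase--Ilten \cite[Theorem~3.6]{1903.02681} and used as a black box, so there is no in-paper proof to compare your attempt against.

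That said, your overall architecture---incidence correspondence over $|D|$, connected sections to manufacture first-order deformations, then a numerical endgame---is indeed the Ein--Voisin--Clemens template that Haase and Ilten adapt. However, your step~3 contains a genuine error that breaks the argument. You write
\[
2g(\widetilde C)-2 \;\ge\; 2p_a(C)-2,
\]
but for a reduced irreducible curve the geometric genus is \emph{at most} the arithmetic genus, so this inequality points the wrong way. Even granting your step-2 bound $C\cdot C|_S\ge C\cdot(E-D)$, adjunction on $S$ would then yield only $2p_a(C)-2\ge C\cdot(E+K_{X_\Sigma})$, which says nothing about the geometric genus once $C$ is singular---and controlling singular curves is precisely the point of the theorem.

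This is not a cosmetic slip: the endgame cannot be routed through adjunction on the (possibly singular) curve inside $S$. In the Haase--Ilten argument one works throughout on the normalization $f\colon\widetilde C\to X_\Sigma$ and its normal sheaf $N_f$. The connected-sections hypothesis is used to produce, for very general $S$, enough sections of an appropriate twist of the rank-one subsheaf of $N_f$ coming from the $N_{C/S}$-direction, forcing a lower bound on its degree on $\widetilde C$. The genus inequality is then read off from $2g-2=\deg K_{\widetilde C}=C\cdot K_{X_\Sigma}+\deg N_f$ together with the filtration of $N_f$ by that subsheaf and the quotient mapping to $f^*\mathcal O_{X_\Sigma}(D)$. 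So your steps~1 and~2 are aimed in the right direction, but the target of step~2 should already be a degree (or $h^0$) statement on $\widetilde C$, and step~3 must be a degree-count on $\widetilde C$ rather than adjunction on $S$.
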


Even though the above theorem gives a bound for most of the curves, we need to determine the genus of finitely many curves that lie in the boundary. Polytopes associated to the divisors again will play a crucial role here. 
\begin{lemma}[{\cite[Lemma 4.1]{1903.02681}}]\label{boundarycurves}
Let $S\in |D|$ be a general surface, where $D$ is a big and basepoint free divisor. If $C\subset S$ is an curve contained in the toric boundary of $X$, then $C=S\cap D_{\rho}$ for some $\rho \in \Sigma(1)$ corresponding to a face $F<P(D)$ on which the ray $\rho$ takes its minimum. Then, the geometric genus of $C$ equals the number of interior lattice points of $F$ viewed in a two dimensional ambient space.  
\end{lemma}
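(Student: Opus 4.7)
The plan is to reduce the problem to a statement about a general member of a linear system on the toric surface $D_\rho$, and then invoke the classical formula relating the genus of a curve on a toric surface to the interior lattice points of its polytope.

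\textbf{Step 1: Reduction to $S\cap D_\rho$.}
The toric boundary decomposes as $\partial X_\Sigma = \bigcup_{\rho\in\Sigma(1)} D_\rho$. Since $D$ is big and $S$ is very general in $|D|$, the surface $S$ is not torus invariant, so $S\neq D_\rho$ and the intersection $S\cap D_\rho$ is a curve on the smooth toric surface $D_\rho$. Any irreducible curve $C\subset\partial X_\Sigma$ lying on $S$ must be contained in some $D_\rho$ and hence occurs as a component of $S\cap D_\rho$. Because $D$ is basepoint free, the restricted linear system $|D|_{D_\rho}|$ is also basepoint free, so Bertini ensures that $S\cap D_\rho$ is smooth for general $S$, and generically $C$ is the whole intersection.

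\textbf{Step 2: Identifying the polytope of the restriction.}
The prime divisor $D_\rho$ is itself a smooth complete toric surface, with fan the star of $\rho$ in the quotient lattice $N/\mathbb{Z}u_\rho$. The restricted divisor $D|_{D_\rho}$ is torus invariant and basepoint free. Standard toric theory identifies its polytope $P(D|_{D_\rho})$ with the face of $P(D)$ cut out by the supporting hyperplane $\langle m,u_\rho\rangle = -a_\rho$; this is precisely the face $F$ on which the functional $\langle -, u_\rho\rangle$ attains its minimum. Viewed relative to the rank-two sublattice $u_\rho^\perp\cap M$, the face $F$ becomes a genuine lattice polytope in a two-dimensional ambient space, matching the framing in the statement.

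\textbf{Step 3: Applying the genus formula for toric surfaces.}
For a smooth complete toric surface $Y$ and a basepoint-free divisor $L$ on $Y$, a classical theorem (going back to Khovanskii) states that a general smooth $C'\in |L|$ has geometric genus equal to the number of interior lattice points of $P(L)$. Applying this with $Y=D_\rho$, $L=D|_{D_\rho}$, and $C' = C = S\cap D_\rho$ yields $g(C) = \#\bigl(\mathrm{int}(F)\cap (u_\rho^\perp\cap M)\bigr)$, which is exactly the claim.

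\textbf{Main obstacle.} The delicate step is Step 2, namely the precise identification between $P(D|_{D_\rho})$ and the face $F$ together with the correct rank-two lattice in which to count interior points; one must keep careful track of the quotient $N/\mathbb{Z}u_\rho$ and its dual $u_\rho^\perp\cap M$. A secondary point is the degenerate situation where $F$ has dimension less than two: then $D|_{D_\rho}$ fails to be big on $D_\rho$, the curve $C$ is rational, and $F$ has no interior lattice points, so both sides of the equation vanish and the statement holds after a direct verification of this boundary case.
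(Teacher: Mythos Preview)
The paper does not prove this lemma; it is quoted from \cite[Lemma~4.1]{1903.02681} and used as a black box. Your outline is a correct reconstruction of the standard argument (and matches the one in the cited source): restrict to the smooth toric surface $D_\rho$, identify $P(D|_{D_\rho})$ with the face $F$ of $P(D)$ on which $\langle\,\cdot\,,u_\rho\rangle$ is minimized (viewed in the rank-two lattice $u_\rho^\perp\cap M$), and apply the sectional-genus formula for nef divisors on toric surfaces. The one step you might make more explicit is why a general $S\in|D|$ cuts out a \emph{general} member of $\bigl|D|_{D_\rho}\bigr|$: this requires surjectivity of $H^0(X_\Sigma,\mathcal{O}(D))\to H^0(D_\rho,\mathcal{O}(D|_{D_\rho}))$, which in the toric setting is immediate because the monomial basis of the target is indexed by the lattice points of $F\subset P(D)$. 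Your treatment of the degenerate case $\dim F<2$ is exactly what underlies the Remark following the lemma in the paper.
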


\begin{remark}
If we do not assume that $D$ is big, then the geometric genus of $C$ is at most the number of interior lattice points of $F$.  Since $D$ is non-trivial but not big, dim$(P(D))$ is either 1 or 2. In this case, there is always a ray $\rho$ such that it takes its minimum on a 1-$d$ face. Since it has no interior lattice points we can conclude that there is a genus 0 curve in the boundary.
\end{remark}

\begin{corollary}\label{corollary111}
Let $X_{\Sigma}$ be a smooth projective toric threefold. Then a very general surface $S \in |D|$ is algebraically
hyperbolic if it has the following two properties:
\begin{enumerate}
    \item All curves in the toric boundary have genus at least two.
    \item There exists an ample divisor $H$ and an $\epsilon_0>0$ such that any curve not in the toric boundary satisfies
    \[2g(C)-2 \geq \epsilon_0 (C\cdot H).\]
\end{enumerate}
\end{corollary}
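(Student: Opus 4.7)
The plan is to combine the two hypotheses to produce a single constant $\epsilon>0$ witnessing algebraic hyperbolicity of a very general $S\in|D|$. Using the ample divisor $H$ from condition (2), I would seek an $\epsilon>0$ such that every irreducible curve $C\subset S$ satisfies $2g(C)-2\geq\epsilon(C\cdot H)$.

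First, condition (2) immediately settles every curve not contained in the toric boundary of $X_\Sigma$: these curves already obey the desired inequality with $\epsilon=\epsilon_0$, so no further work is needed for this (infinite) family.

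Next I would handle the boundary curves. By Lemma \ref{boundarycurves}, any curve of $S$ lying in the toric boundary is of the form $C_\rho:=S\cap D_\rho$ for some $\rho\in\Sigma(1)$. Since $\Sigma(1)$ is finite, so is the set of such curves. Hypothesis (1) gives $g(C_\rho)\geq 2$ for each of them, hence $2g(C_\rho)-2>0$. The intersection number $C_\rho\cdot H$ equals the triple intersection $D\cdot D_\rho\cdot H$ on $X_\Sigma$, a fixed positive integer (positive because $H$ is ample and $C_\rho$ is a nonzero effective curve on $S$). I would then set
\[\epsilon_1:=\min_{\rho}\frac{2g(C_\rho)-2}{C_\rho\cdot H},\]
where the minimum ranges over the finitely many rays $\rho$ for which $C_\rho$ is an actual curve, obtaining a strictly positive constant. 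Finally, taking $\epsilon:=\min(\epsilon_0,\epsilon_1)>0$ furnishes the single constant required by Demailly's definition, so $S$ is algebraically hyperbolic.

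I do not anticipate a serious obstacle: the hard analytic content sits inside hypothesis (2), which handles the one infinite family of curves via a uniform bound, while hypothesis (1) combined with the finiteness of $\Sigma(1)$ reduces the boundary case to a finite minimum of positive rationals. The only mild technicality to check is that the intersections $D\cdot D_\rho\cdot H$ are indeed computed on the ambient threefold independently of the choice of $S\in|D|$, so that $\epsilon_1$ is a bound valid for the very general member, not just for one specific surface.
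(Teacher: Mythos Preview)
Your argument is correct and follows essentially the same route as the paper: both proofs take $\epsilon$ to be the minimum of $\epsilon_0$ and finitely many positive quantities associated to the boundary curves $C_1,\dots,C_k$. The only cosmetic difference is that the paper uses the cruder bound $1/(C_i\cdot H)$ in place of your $(2g(C_i)-2)/(C_i\cdot H)$, which works since $g(C_i)\geq 2$ forces $2g(C_i)-2\geq 1$.
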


\begin{proof}

Let $S$ be any very general surface in $|D|$. Let $C_1,\ldots, C_k$ be all finitely many curves in the boundary. %We may thus take 
%\[\epsilon=\text{min}\{\epsilon_0\}\cup\{\dfrac{1}{C_i.H}\}.\]
Take $\epsilon$ to be minimum among $\epsilon_0$ and $\dfrac{1}{C_i\cdot H}$ for $i=1,\ldots,k$. Then, we have 
\[2g(C)-2 \geq \epsilon (C\cdot H),\] for every curve in $S$.

\end{proof}

\begin{remark}
If we are only interested in studying pseudo-algebraic hyperbolicity, we can forget about 
curves in the boundary, and Theorem \ref{Theorem5.14} is sufficient. 
\end{remark}

We will also use the following result.

\begin{theorem}[Generalized Noether-Lefschetz theorem]\label{Theorem5.17}
Let $X_{\Sigma}$ be a smooth threefold, and $D$ be a divisor such that $D+K_{X_{\Sigma}}$ is basepoint free. Then for a very general surface $S\in|D|$, the restriction map \textup{Pic}$(X_{\Sigma})\rightarrow$ \textup{Pic}$(S)$ is an isomorphism. 
\end{theorem}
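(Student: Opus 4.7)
The plan is to split the claim into injectivity and surjectivity of the restriction map and handle each with a separate classical technique.

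For injectivity I would combine the exponential sheaf sequence with the Lefschetz hyperplane theorem. In the applications to smooth projective toric threefolds $X_\Sigma$, the adjoint hypothesis that $D + K_{X_\Sigma}$ is basepoint free forces $D$ to be positive enough that $H^2(X_\Sigma,\mathbb{Z}) \hookrightarrow H^2(S,\mathbb{Z})$ for any smooth $S \in |D|$. On the toric side $H^1(X_\Sigma, \mathcal{O}_{X_\Sigma}) = H^2(X_\Sigma, \mathcal{O}_{X_\Sigma}) = 0$, so the exponential sequence identifies $\Pic(X_\Sigma)$ with a sublattice of $H^2(X_\Sigma, \mathbb{Z})$ that embeds into $H^2(S, \mathbb{Z})$, and the exponential sequence on $S$ then delivers injectivity into $\Pic(S)$.

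For surjectivity I would invoke the Noether--Lefschetz principle via variation of Hodge structure. Consider the universal family $\pi \colon \mathcal{S} \to U$ of smooth members of $|D|$ together with the local system $R^2\pi_*\mathbb{Z}$ and its Hodge filtration. The image of $H^2(X_\Sigma, \mathbb{Z})$ in $H^2(S_t, \mathbb{Z})$ is a constant sub-Hodge structure, and a line bundle on $S_t$ extends to $X_\Sigma$ exactly when its first Chern class lies in this sub-structure. The Noether--Lefschetz locus, where extra integral $(1,1)$-classes appear on fibers, is a countable union of analytic subvarieties of $U$, and the surjectivity reduces to showing each of its components is a \emph{proper} subvariety.

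The main obstacle is the corresponding infinitesimal Hodge-theoretic statement: the differential of the period map on primitive $H^{2,0}$ must be surjective into the relevant $\text{Hom}$-space. For $X_\Sigma = \mathbb{P}^3$ this is Griffiths' Jacobian-ring calculation and yields the classical Noether--Lefschetz theorem for $d \geq 4$, which is precisely the case where $D + K_{\mathbb{P}^3}$ is basepoint free. The general statement I would cite is the adjoint version of Noether--Lefschetz due to Ravindra and Srinivas, whose hypotheses match ours. Once the infinitesimal statement is in hand, a standard Green--Voisin density (Baire category) argument shows that outside a countable union of proper analytic subsets of $U$ no new Hodge classes appear on $S_t$, completing the proof.
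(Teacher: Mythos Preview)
Your proposal is correct and in fact aligns with the paper: the paper's own proof is a one-line citation of Theorem~1 in \cite{MR2567426} (the Ravindra--Srinivas adjoint Noether--Lefschetz theorem), which is precisely the result you identify and propose to invoke. Your sketch goes further by outlining the mechanism behind that theorem---injectivity via the exponential sequence and Lefschetz hyperplane, surjectivity via the infinitesimal variation of Hodge structure and a Baire-category argument on the Noether--Lefschetz locus---but the endpoint is the same cited result, so there is no substantive divergence.
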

\begin{proof}
This is a particular case of Theorem 1 from \cite{MR2567426}.
\end{proof}

\section{Classification of toric varieties with Picard rank 2 or 3}

In this section, we recall the fan description of all smooth complete toric threefolds with Picard rank 2 or 3 using primitive collections. The notion of primitive collections is introduced by Batyrev \cite{batyrev1991}, which make classifications and computations easier for toric varieties. We also know that every smooth complete toric variety of Picard rank at most 3 is necessarily projective \cite{MR1098923}.

\begin{definition}\label{primitive}
Let $\Sigma$ be a fan.
A subset $\mathscr{P}=\{\rho_1,\rho_2,\ldots,\rho_k\} \subset \Sigma(1)$ is called a primitive collection if $\mathscr{P}$ is not contained in a single cone of $\Sigma$, but every proper subset is. Let $\mathscr{P}$ be a primitive collection and $\sigma\in\Sigma$ be the cone of the smallest dimension containing $u_{\rho_1}+\cdots+u_{\rho_k}$. Then there exist a unique expression
\begin{equation}\label{primitiverel}
u_{\rho_1}+\cdots+u_{\rho_k}=\sum_{\rho\in \sigma(1)}c_{\rho}u_{\rho}, \text{~~~}c_{\rho}\in\mathbb{Z}_{>0}. 
\end{equation}
The equation (\ref{primitiverel}) is the primitive relation of $\mathscr{P}$. 
\end{definition}

\begin{definition}
A fan is called a splitting fan if there is no intersection between any two primitive collections.
\end{definition}

It is easy to check whether a torus invariant Cartier divisor $D$ is nef or ample using primitive collections.

\begin{theorem}[{\cite[Theorem 6.4.9]{MR2810322}}]\label{theoremconevex}
Let $X_\Sigma$ be a smooth projective variety and $D$ be a torus invariant divisor on $X_{\Sigma}$. Then
$D$ is nef if and only if it satisfies
    \[\varphi_{D}(u_{\rho_{1}}+\cdots+u_{\rho_{k}})\geq \varphi_{D}(u_{\rho_{1}})+\cdots+\varphi_{D}(u_{\rho_{k}})\]
    for all primitive collections $\mathscr{P}=\{\rho_1,\ldots,\rho_k\}$ of $\Sigma$. Similarly, $\varphi_D$ is ample 
    if and only if it satisfies
    \[\varphi_{D}(u_{\rho_{1}}+\cdots+u_{\rho_{k}})> \varphi_{D}(u_{\rho_{1}})+\cdots+\varphi_{D}(u_{\rho_{k}})\]
    for all primitive collections $\mathscr{P}=\{\rho_1,\ldots,\rho_k\}$ of $\Sigma$.
\end{theorem}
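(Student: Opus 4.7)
The plan is to combine Kleiman's criterion with Batyrev's description of the Mori cone $\overline{\mathrm{NE}}(X_\Sigma)$ of a smooth projective toric variety in terms of primitive collections. I would attach a numerical curve class to each primitive collection, compute its intersection with $D$ using $\varphi_D$, and then invoke the fact that these classes generate $\overline{\mathrm{NE}}(X_\Sigma)$ to translate Kleiman's inequality into the stated inequality on $\varphi_D$.

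\textbf{Step 1 (from primitive relations to curve classes).} Given a primitive collection $\mathscr{P}=\{\rho_1,\ldots,\rho_k\}$ with primitive relation
\[u_{\rho_1}+\cdots+u_{\rho_k}=\sum_{\rho\in\sigma(1)}c_\rho u_\rho,\]
the underlying linear relation in $N$ determines a numerical class $v(\mathscr{P})\in N_1(X_\Sigma)_{\mathbb{R}}$ by duality with the exact sequence (\ref{exact2}). Since $\sigma\in\Sigma$, the support function $\varphi_D$ is linear on $\sigma$, so $\varphi_D\bigl(\sum c_\rho u_\rho\bigr)=\sum c_\rho \varphi_D(u_\rho)$. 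A direct intersection-theoretic calculation in the Cox ring then yields
\[D\cdot v(\mathscr{P}) \;=\; \varphi_D(u_{\rho_1}+\cdots+u_{\rho_k}) - \bigl(\varphi_D(u_{\rho_1})+\cdots+\varphi_D(u_{\rho_k})\bigr).\]

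\textbf{Step 2 (Mori cone generated by primitive collections).} The key input is the toric Mori cone theorem of Batyrev: for a smooth projective toric variety $X_\Sigma$, the Mori cone $\overline{\mathrm{NE}}(X_\Sigma)$ is generated as a cone by the classes $v(\mathscr{P})$ as $\mathscr{P}$ ranges over all primitive collections of $\Sigma$. One proves this combinatorially, starting from the standard wall description of $\overline{\mathrm{NE}}(X_\Sigma)$ (one generator per codimension-one cone of $\Sigma$) and then rewriting each wall class as a nonnegative combination of $v(\mathscr{P})$'s using the primitive relations that arise from the fan geometry.

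\textbf{Step 3 (conclusion).} By Kleiman's criterion, $D$ is nef iff $D\cdot\gamma\ge 0$ for every $\gamma\in\overline{\mathrm{NE}}(X_\Sigma)$, which by Step 2 is equivalent to $D\cdot v(\mathscr{P})\ge 0$ for all primitive $\mathscr{P}$; by Step 1 this is exactly the stated inequality on $\varphi_D$. The ample case is identical, replacing $\ge$ by $>$ and using that each $v(\mathscr{P})$ is a nonzero class in the closed Mori cone together with the fact that ampleness is equivalent to strict positivity on $\overline{\mathrm{NE}}(X_\Sigma)\setminus\{0\}$.

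The main obstacle is Step 2, namely the fact that the relatively small collection of primitive classes $v(\mathscr{P})$ already suffices to generate $\overline{\mathrm{NE}}(X_\Sigma)$ — it is not enough to merely observe that the walls of $\Sigma$ generate the cone. This is a nontrivial combinatorial statement about the interplay between the fan structure and primitive relations, and is the content of Batyrev's original argument. Granting it, the remainder of the proof is Kleiman's criterion plus the bookkeeping of Step 1.
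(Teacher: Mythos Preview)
The paper does not give its own proof of this theorem: it is stated with a citation to \cite[Theorem 6.4.9]{MR2810322} and used as a black box. Your outline is correct and is essentially the standard argument found in that reference: one attaches to each primitive collection $\mathscr{P}$ the curve class $v(\mathscr{P})$ coming from the primitive relation, checks the intersection formula $D\cdot v(\mathscr{P})=\varphi_D\bigl(\sum u_{\rho_i}\bigr)-\sum\varphi_D(u_{\rho_i})$ using linearity of $\varphi_D$ on $\sigma$, invokes Batyrev's theorem that the $v(\mathscr{P})$ generate the (polyhedral) Mori cone, and concludes via Kleiman's criterion. You have also correctly identified that the substantive content is Step~2; the rest is bookkeeping.
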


Kleinschmidt \cite{Kleinschmidt1988} classified all smooth complete toric varieties with Picard rank 2. It turns out that all such varieties are projectivization of decomposable bundles over a projective space of a smaller dimension. We are only interested in three dimensional cases and it has following description.

\begin{theorem}[{\cite[Theorem 7.3.7, Example 7.3.5]{MR2810322}}]\label{fanrank2}
Let $X_{\Sigma}$ be a smooth complete toric threefold with Picard rank 2. Then we have the following two cases.
\begin{itemize}
    \item $X_{\Sigma} \cong \mathbb{P}\left(\mathcal{O}_{\mathbb{P}^2}\oplus \mathcal{O}_{\mathbb{P}^2}(l)\right)$ with $l\geq0$. The fan $\Sigma$ has primitive ray generators $u_1=e_1,u_2=-e_1, u_3=e_2,u_4=e_3$ and $u_5=le_1-e_2-e_3$. The only primitive collections are $\{\rho_1,\rho_2\}$ and $\{\rho_3,\rho_4,\rho_5\}$. 
    \item $X_{\Sigma} \cong \mathbb{P}\left( \mathcal{O}_{\mathbb{P}^1}\oplus \mathcal{O}_{\mathbb{P}^1}(l_1)\oplus \mathcal{O}_{\mathbb{P}^1}(l_2)\right)$ with $l_2\geq l_1 \geq 0$. The fan $\Sigma$ has primitive ray generators $u_1=e_1,u_2=e_2, u_3=-e_1-e_2,u_4=e_3$ and $u_5=l_1e_1+l_2e_2-e_3$. The only primitive collections are $\{\rho_1,\rho_2,\rho_3\}$ and $\{\rho_4,\rho_5\}$.
\end{itemize}
\end{theorem}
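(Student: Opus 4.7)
The plan is to follow the classical primitive-collection analysis due to Kleinschmidt. Since $X_\Sigma$ is smooth complete of dimension $3$ with Picard rank $2$, the fan $\Sigma$ has $r = n + \mathrm{rk}\,\Pic(X_\Sigma) = 5$ rays. Using the Picard rank constraint together with smoothness of each maximal cone, one shows that $\Sigma$ admits exactly two primitive collections, that they are disjoint, and that their sizes are $2$ and $3$: any primitive collection has size at least $2$, and the only partition of $5$ into such parts is $\{2,3\}$.

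Next, I would write down the two primitive relations and argue that at least one of them has trivial right-hand side, i.e.\ $\sum_{\rho \in \mathscr{P}} u_\rho = 0$ for one of the collections $\mathscr{P}$. This forks the analysis into two cases corresponding to the two bullets of the statement, according to whether the balanced collection has size $2$ or size $3$. In the first case, $\mathscr{P}_1 = \{\rho_1, \rho_2\}$ with $u_{\rho_1} + u_{\rho_2} = 0$ and the other relation reads $u_{\rho_3} + u_{\rho_4} + u_{\rho_5} = l\,u_{\rho_1}$ for some integer $l \geq 0$ (swapping $\rho_1 \leftrightarrow \rho_2$ if needed); in the second case, $\mathscr{P}_2 = \{\rho_1, \rho_2, \rho_3\}$ with $u_{\rho_1} + u_{\rho_2} + u_{\rho_3} = 0$ and $u_{\rho_4} + u_{\rho_5} = l_1 u_{\rho_1} + l_2 u_{\rho_2}$ after relabeling, with $l_2 \geq l_1 \geq 0$ after permuting $\rho_1, \rho_2, \rho_3$.

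For each case, a $GL_3(\mathbb{Z})$-change of basis in $N$ brings the five ray generators into the explicit normal forms listed in the statement, and one verifies simultaneously that every maximal cone stays unimodular. Recognizing these fans as the toric fans of the split projective bundles $\mathbb{P}(\mathcal{O}_{\mathbb{P}^2} \oplus \mathcal{O}_{\mathbb{P}^2}(l))$ and $\mathbb{P}(\mathcal{O}_{\mathbb{P}^1} \oplus \mathcal{O}_{\mathbb{P}^1}(l_1) \oplus \mathcal{O}_{\mathbb{P}^1}(l_2))$ respectively yields the claimed isomorphisms, and the primitive collections listed in each bullet are then immediate from the combinatorics of the maximal cones. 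The main obstacle will be the structural claim that one primitive relation must be trivial; this is the heart of Kleinschmidt's classification and the step that genuinely uses smoothness of the fan, whereas the remaining normalization is bookkeeping.
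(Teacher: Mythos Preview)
The paper does not actually prove this theorem: it is stated as a cited result, attributed to \cite[Theorem 7.3.7, Example 7.3.5]{MR2810322} and ultimately to Kleinschmidt's classification \cite{Kleinschmidt1988}. There is nothing in the paper to compare your argument against beyond the citation itself.

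Your sketch is a reasonable outline of the standard Kleinschmidt-style proof. A couple of points are worth flagging, though. First, the assertion that the two primitive collections are disjoint and partition the five rays into blocks of sizes $2$ and $3$ is not as immediate as you make it sound; this is itself a nontrivial consequence of smoothness and Picard rank $2$, and is really the content of Kleinschmidt's theorem (or, in Batyrev's language, that a Picard rank $2$ fan is a splitting fan). You invoke it without justification. Second, the claim that at least one primitive relation has trivial right-hand side is correct here and is indeed the step that pins down the normal form, but again it deserves an argument rather than an announcement. If you intend this as more than a citation, those two structural facts are where the real work lies; the subsequent $GL_3(\mathbb{Z})$ normalization and identification with the projective bundles are, as you say, bookkeeping.
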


\begin{remark}\label{remark11}
We can also conclude from \cite[Example 7.3.5]{MR2810322} that smooth complete two dimensional toric varieties with Picard rank 2 are Hirzebruch surfaces  $\mathcal{H}_r=\mathbb{P}(\mathcal{O}_{\mathbb{P}^1}\oplus \mathcal{O}_{\mathbb{P}^1}(r))$. The fan $\Sigma$ has primitive ray generators $u_1=e_1,u_2=-e_1+re_2,u_3=e_2,u_4=-e_2$. The only primitive collections are $\{\rho_1,\rho_2\}$ and $\{\rho_3,\rho_4\}$. It is easy to see that $D_1$ and $D_4$ generate the Picard group using (\ref{exact2}).
\end{remark}

Batyrev completely classified smooth complete toric varieties with Picard rank 3 in terms of primitive collections. He showed that the number of primitive collections of its generators is 3 or 5 \cite[Theorem 5.7]{batyrev1991}. The case of 3 primitive collections is as in the following theorem.

\begin{theorem}\label{fan3}
Let $X_{\Sigma}$ be a smooth complete toric threefold of Picard rank 3 with a splitting fan. Then $X_{\Sigma} \cong \mathbb{P}(\mathcal{O}_{\mathcal{H}_r}\oplus \mathcal{O}_{\mathcal{H}_r}(aD_1+bD_4))$ with $a\geq0$. The fan $\Sigma$ has primitive ray generators $u_1=e_1,u_2=-e_1+re_2+ae_3,u_3=e_2,u_4=-e_2+be_3,u_5=e_3,u_6=-e_3$.  The only primitive collections are $\{\rho_1,\rho_2\},\{\rho_3,\rho_4\}$ and $\{\rho_5,\rho_6\}$.
\end{theorem}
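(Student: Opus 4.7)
The plan is to combine Batyrev's structural results for splitting fans with Kleinschmidt's classification of smooth toric surfaces.

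First I would pin down the combinatorics. From the short exact sequence (\ref{exact2}), a smooth complete toric threefold of Picard rank $3$ has exactly $|\Sigma(1)|=6$ rays. Every primitive collection in a smooth complete fan has size at least $2$, so the three pairwise disjoint primitive collections of a splitting fan must partition $\Sigma(1)$ into three pieces of size exactly two. Label them $\mathscr{P}_1=\{\rho_1,\rho_2\}$, $\mathscr{P}_2=\{\rho_3,\rho_4\}$, $\mathscr{P}_3=\{\rho_5,\rho_6\}$.

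Next I would realize $X_{\Sigma}$ as a toric $\mathbb{P}^1$-bundle. By Batyrev's splitting-fan theorem, a smooth complete toric variety with a splitting fan is an iterated projective bundle: concretely, contracting the primitive collection $\mathscr{P}_3$ gives a morphism $X_{\Sigma}\to Y$ to a smooth complete toric surface $Y$ of Picard rank $2$, which by Remark \ref{remark11} must be a Hirzebruch surface $\mathcal{H}_r$. Since every toric $\mathbb{P}^1$-bundle is the projectivization of a split rank-two bundle, and twisting one summand by a line bundle does not change the projectivization, I can write $X_{\Sigma}\cong \mathbb{P}(\mathcal{O}_{\mathcal{H}_r}\oplus \mathcal{L})$ with $\mathcal{L}=\mathcal{O}_{\mathcal{H}_r}(aD_1+bD_4)$, using that $D_1,D_4$ generate $\mathrm{Pic}(\mathcal{H}_r)$ (Remark \ref{remark11}). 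Replacing $\mathcal{L}$ by $\mathcal{L}^{-1}$ if necessary—which yields an isomorphic projectivization but negates both $a$ and $b$—I can arrange $a\geq 0$.

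Finally I would read off the fan from the standard construction. Each ray $u$ of $\mathcal{H}_r$ lifts to $(u,c)\in\mathbb{Z}^3$, where the constant $c$ records the twist by $aD_1+bD_4$; absorbing the twist into the two ``negative'' rays $u_2,u_4$ of the base while leaving $u_1,u_3$ unchanged produces precisely
$u_1=e_1,\ u_2=-e_1+re_2+ae_3,\ u_3=e_2,\ u_4=-e_2+be_3$,
together with the two fiber rays $u_5=e_3, u_6=-e_3$. The relations
$u_1+u_2=ru_3+au_5$, $u_3+u_4=bu_5$, $u_5+u_6=0$
identify $\{\rho_1,\rho_2\},\{\rho_3,\rho_4\},\{\rho_5,\rho_6\}$ as primitive collections, and any further primitive collection would have to meet one of them, contradicting the splitting hypothesis.

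The main obstacle is bookkeeping rather than conceptual: one must select the correct rays of $\mathcal{H}_r$ to absorb the twist (the construction is asymmetric within each $\mathbb{P}^1$-pair), and must verify that the $\mathcal{L}\leftrightarrow\mathcal{L}^{-1}$ normalization achieves $a\geq 0$ without imposing any sign restriction on $b$.
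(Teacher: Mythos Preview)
Your proposal is correct and follows essentially the same route as the paper's proof: invoke Batyrev's splitting-fan theorem to realize $X_\Sigma$ as the projectivization of a split rank-two bundle over a Picard-rank-2 toric surface, identify the base as $\mathcal{H}_r$ via Remark~\ref{remark11}, normalize so that $a\geq 0$, and then read off the fan from the standard projective-bundle construction (the paper cites \cite[Proposition~7.3.3]{MR2810322} for this last step). The only cosmetic difference is that the paper achieves $a\geq 0$ by ordering the summands ($a_2\geq a_1$) before twisting one to $\mathcal{O}$, whereas you first twist and then swap $\mathcal{L}\leftrightarrow\mathcal{L}^{-1}$; these are the same move.
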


\begin{proof}
In this case, the associated toric variety is isomorphic to the projectivization of a decomposable bundle of rank 2 over a smooth complete toric surface with Picard rank 2 \cite{batyrev1991}.  By Remark \hyperref[remark11]{\ref*{remark11}}, it is clear that smooth complete surfaces are given by Hirzebruch surfaces $\mathcal{H}_{r}$. Hence, we can take $\mathcal{E}=\mathcal{O}_{\mathcal{H}_r}(a_1D_1+b_1D_4)\oplus \mathcal{O}_{\mathcal{H}_r}(a_2D_1+b_2D_4)$. Without loss of generality, take $a_2\geq a_1$ and $\mathcal{L}= \mathcal{O}_{\mathcal{H}_r}(-a_1D_1-b_1D_4))$. Then $\mathbb{P}(\mathcal{E})\cong \mathbb{P}({\mathcal{E}}\otimes \mathcal{L})$ by Lemma 7.9 \cite{MR0463157} and first part of the result follows.  The fan description follows from Proposition \cite[Proposition 7.3.3]{MR2810322}. 

\end{proof}

For smooth complete toric varieties with Picard rank 3 and 5 primitive collections we have the following result.

\begin{theorem}[{\cite[Theorem 6.6]{batyrev1991}}]\label{Batyrev}
Let $X_{\Sigma}$ be a smooth projective toric threefold with Picard rank 3 which does not have a splitting fan. Then its ray generators can be partitioned into 5 non-empty sets $Y_0,Y_1,\cdots,Y_4$ in such a way that the primitive collections are $Y_i\cup Y_{i+1}$, where $i\in \mathbb{Z}/5\mathbb{Z}$. 
\end{theorem}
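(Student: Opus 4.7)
The plan is to follow the argument of Batyrev in \cite[Theorem 6.6]{batyrev1991}, which leverages the rigidity of primitive relations in a smooth complete fan to force the cyclic combinatorial type.

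First, I would observe that, by the exact sequence (\ref{exact2}), a smooth complete toric threefold of Picard rank $3$ has exactly $|\Sigma(1)| = 6$ rays. By \cite[Theorem 5.7]{batyrev1991}, the fan $\Sigma$ admits either $3$ or $5$ primitive collections, and the splitting case corresponds exactly to three pairwise disjoint primitive collections. Under the non-splitting hypothesis we therefore have exactly five primitive collections $\mathscr{P}_0,\ldots,\mathscr{P}_4$, and the task reduces to showing that they can be cyclically labeled so that consecutive pairs share rays while non-consecutive pairs are disjoint.

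The central idea is to read off the cyclic structure from the primitive relations. Each primitive collection $\mathscr{P}_j$ contributes a primitive relation (\ref{primitiverel}), and I would use smoothness together with the Picard rank $3$ constraint to establish two combinatorial facts: (i) every ray lies in at most two primitive collections, and (ii) the incidence graph whose vertices are the five primitive collections, with an edge between any pair sharing a ray, is a single $5$-cycle. Step (i) follows from a local analysis around a ray, using that each maximal cone containing the ray has exactly $n = 3$ generators, so the star of the ray has limited room to "miss'' cones. For step (ii), note that the graph is connected (otherwise one could split off a sub-fan and contradict non-splitting) and by (i) has maximum degree at most $2$; combined with the size count $\sum_j |\mathscr{P}_j| = 2|\Sigma(1)| = 12$ (forced by equality in (i) on every ray) and $|\mathscr{P}_j|\geq 2$, this leaves only the $5$-cycle. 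With the cyclic ordering $\mathscr{P}_0,\ldots,\mathscr{P}_4$ fixed, define $Y_i := \mathscr{P}_{i-1}\cap \mathscr{P}_i$ with indices modulo $5$; the $Y_i$ are pairwise disjoint and non-empty by the $5$-cycle property, jointly cover $\Sigma(1)$ by the size count, and satisfy $\mathscr{P}_i = Y_i \cup Y_{i+1}$ by construction.

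The main obstacle is proving step (i) and the structural reduction in step (ii) rigorously: arguing that no ray can simultaneously belong to three primitive collections, and then excluding every non-cyclic configuration on five vertices, requires combining the definition of a primitive collection with the smoothness condition on cones and the linear relations on the $u_\rho$ coming from (\ref{exact2}). This delicate combinatorial analysis, carried out in detail in Batyrev's original paper, is where the Picard rank $3$ hypothesis enters decisively and distinguishes the non-splitting case from the splitting one treated in Theorem \ref{fan3}.
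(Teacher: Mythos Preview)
The paper does not supply a proof of this theorem: it is stated with the citation \cite[Theorem~6.6]{batyrev1991} and used as a black box, so there is no in-paper argument to compare against. Your sketch is therefore an attempt to reproduce Batyrev's original reasoning rather than the paper's.

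As a sketch it has a genuine gap. Your step~(i) says that every ray lies in at most two primitive collections; from this you conclude that the incidence graph on the five primitive collections (with an edge whenever two collections share a ray) has maximum degree at most~$2$. That implication is false as stated: step~(i) bounds the degree of each \emph{ray} in the bipartite ray--collection incidence, not the degree of each \emph{collection} in your graph. A primitive collection containing three rays could in principle share a different ray with each of three other collections, giving it degree~$3$. Likewise, your size count $\sum_j |\mathscr{P}_j| = 12$ asserts equality in~(i) for every ray, but you give no reason why a ray cannot lie in exactly one primitive collection; without this, the partition $Y_i := \mathscr{P}_{i-1}\cap\mathscr{P}_i$ need not cover $\Sigma(1)$ and need not reconstruct $\mathscr{P}_i$ as $Y_i\cup Y_{i+1}$. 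Batyrev's actual argument handles these points through a finer analysis of how primitive relations interact (in particular, how the right-hand side of one primitive relation is forced to involve rays from specific other collections), and this is precisely the ``delicate combinatorial analysis'' you allude to but do not carry out. If you want a self-contained proof here, that interaction of primitive relations is the missing ingredient; otherwise, citing Batyrev as the paper does is the appropriate course.
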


Let $\Sigma$ be a fan as in Theorem \ref{Batyrev}. Following \cite{batyrev1991}, we can list all the possibilities. Let
\begin{align*}
    Y_0&=\{v_1,\ldots,v_{p_0}\},    &  Y_1&=\{y_1,\ldots,y_{p_1}\}, &     Y_2=\{z_1,\ldots,z_{p_2}\},\\
    Y_3&=\{t_1,\ldots,t_{p_3}\},    & Y_4&=\{u_1,\ldots,u_{p_4}\}.
\end{align*}
Then arranging the primitive ray generators of $\Sigma$ in rows of the matrix $A$ we have the following 5 cases:

\begin{equation*}
    \begin{split}
        \begin{pNiceMatrix}[first-col]
        v_1&1 & 0 &0 \\ 
        v_2&0 &1 &0 \\
        u_1&-1&-1&b_1\\
        y_1&-1&-1&b_1+1\\
        t_1&0&0&1\\
        z_1&0&0&-1
        \end{pNiceMatrix},
    \end{split}
    \quad
    \begin{split}
        \begin{pNiceMatrix}[first-col]
        v_1&1 & 0 &0 \\ 
        u_1&-1 &0 &b_1 \\
        y_1&-1&-1&b_1+1\\
        y_2&0&1&0\\
        t_1&0&0&1\\
        z_1&0&0&-1
        \end{pNiceMatrix},
    \end{split}
    \quad
    \begin{split}
        \begin{pNiceMatrix}[first-col]
        v_1&1 & 0 &0 \\ 
        u_1&-1 &b_1 &c_2 \\
        y_1&-1&b_1+1&c_2\\
        t_1&0&1&0\\
        z_1&0&-1&-1\\
        z_2&0&0&1
        \end{pNiceMatrix},
    \end{split}
\end{equation*}    

\begin{equation*}
    \begin{split}
    \begin{pNiceMatrix}[first-col]
        v_1&1 & 0 &0 \\ 
        u_1&-1 &b_1 &b_2 \\
        y_1&-1&b_1+1&b_2+1\\
        t_1&0&1&0\\
        t_2&0&0&1\\
        z_1&0&-1&-1
        \end{pNiceMatrix},
    \end{split}    
    \begin{split}
       \begin{pNiceMatrix}[first-col]
        v_1&1 & 0 &0 \\ 
        u_1&-1 &-1 &b_1 \\
        u_2&0&1&0\\
        y_1&-1&0&b_1+1\\
        t_1&0&0&1\\
        z_1&0&0&-1
        \end{pNiceMatrix}. 
    \end{split}
\end{equation*}

\section{Main results on algebraic hyperbolicity}

In this section, we present our results on the algebraic hyperbolic surfaces in the smooth projective threefolds with Picard rank 2 and 3.
We will first study (\ref{exact1}) for each case. For convenience we will label different cases as follows:
\begin{itemize}
    \item Case 2.0.1.  $X_{\Sigma}\cong \mathbb{P}(\mathcal{O}_{\mathbb{P}^2}\oplus \mathcal{O}_{\mathbb{P}^2}(l))$.
    \item Case 2.0.2. $X_{\Sigma} \cong \mathbb{P}(\mathcal{O}_{\mathbb{P}^1}\oplus \mathcal{O}_{\mathbb{P}^1}(l_1)\oplus \mathcal{O}_{\mathbb{P}^1}(l_2))$. 
    \item Case 3.0.1. The fan $\Sigma$ in Theorem \ref{fan3} with $b\geq0$.
    \item Case 3.0.2. The fan $\Sigma$ in Theorem \ref{fan3} with $b<0$.
    \item Case 3.1.1. The fan $\Sigma$ in Theorem \ref{Batyrev} with $n=3,p_0=2$.
    \item Case 3.1.2. The fan $\Sigma$ in Theorem \ref{Batyrev} with $n=3,p_1=2$.
    \item Case 3.1.3. The fan $\Sigma$ in Theorem \ref{Batyrev} with $n=3,p_2=2$.
    \item Case 3.1.4. The fan $\Sigma$ in Theorem \ref{Batyrev} with $n=3,p_3=2$.
    \item Case 3.1.5. The fan $\Sigma$ in Theorem \ref{Batyrev} with $n=3,p_4=2$.
\end{itemize}

Here the first number in a case represents the Picard rank, second number 0 means it is associated with a splitting fan and 1 means not and the last number enumerates different cases if the first two numbers match. For Proposition \ref{prop6.1} and Proposition \ref{proposition6.2}, we will only give details for Case 2.0.1. The argument for the other cases is similar to Case 2.0.1.

\begin{proposition}\label{prop6.1}
The matrices $A,B$ in (\ref{exact1}) for each case and Markov basis for $B$ as columns of a matrix are listed in Table \ref{Markov}.
\end{proposition}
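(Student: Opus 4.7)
The plan is to work out Case 2.0.1 in full detail as a template and indicate how the same procedure applies to the remaining eight cases.

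For each case, I would read off the primitive ray generators $u_1,\ldots,u_r$ from the fan description in Theorems \ref{fanrank2}, \ref{fan3}, and \ref{Batyrev}. The matrix $A$ representing $m\mapsto(\langle m,u_\rho\rangle)_\rho$ then has these generators as its rows. Next I would compute the cokernel of $A$ to present $\Pic(X_\Sigma)$ and select a basis; a natural choice discards one $D_\rho$ from each primitive collection so that the remaining divisor classes give a $\mathbb{Z}$-basis. This determines $B$, and one checks $BA=0$ together with the rank conditions to confirm exactness of (\ref{exact1}). For Case 2.0.1, the column relations of $A$ force $D_3\equiv D_4\equiv D_5$ and $D_1+lD_5\equiv D_2$, so the basis $\{[D_2],[D_5]\}$ yields
\[
B=\begin{pmatrix} 1 & 1 & 0 & 0 & 0 \\ -l & 0 & 1 & 1 & 1 \end{pmatrix}.
\]

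For the Markov basis I would take the columns of $A$ itself. These give a $\mathbb{Z}$-basis of $\ker(B)\cap\mathbb{Z}^r$, because $\pi$ has torsion-free cokernel and so $\mathrm{im}(A)$ is a saturated rank-$3$ sublattice of $\ker(B)$. In Case 2.0.1 these columns yield the binomials $x_1x_5^l-x_2$, $x_3-x_5$, and $x_4-x_5$. The main obstacle is verifying that a lattice-basis actually produces a Markov basis, i.e.\ generates the ideal $I_B$ rather than merely the lattice. For this I plan to use an elimination argument: the binomials $x_3-x_5$ and $x_4-x_5$ allow us to eliminate $x_3,x_4$, reducing $I_B$ modulo them to the kernel of the parametrization $x_1\mapsto y_1y_2^{-l},\ x_2\mapsto y_1,\ x_5\mapsto y_2$ of $\mathbb{C}[x_1,x_2,x_5]$, which is principal and generated by $x_1x_5^l-x_2$. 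Hence the three binomials generate $I_B$.

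For the remaining cases the same three steps are carried out in turn: form $A$ from the primitive generators, choose a basis of $\Pic$ to produce $B$, and take the columns of $A$ as the proposed Markov basis, then verify generation of $I_B$ by the analogous elimination. The algebra becomes more elaborate for Cases 3.1.1--3.1.5, which involve six rays and the parameters $b_1,b_2,c_2$, but no new idea is needed; in each case enough of the binomials are of the form $x_i-x_j$ (or involve only one "parametric" exponent) that the reduction collapses $I_B$ onto a principal or nearly-principal toric ideal whose Markov basis is evident. The resulting explicit matrices $A$, $B$, and Markov basis for every case are then collected in Table \ref{Markov}.
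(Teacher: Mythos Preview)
Your verification method---eliminate variables using the linear binomials and show the remaining quotient is a polynomial ring---is essentially the paper's argument. The paper writes down the candidate ideal $I$, computes $\mathbb{C}[x_1,\ldots,x_5]/I\cong\mathbb{C}[x_1,x_5]$ by the same successive eliminations, concludes $I$ is prime of the correct dimension, and invokes \cite[Lemma~4.2]{MR1363949} to get $I=I_B$. So for Case~2.0.1 your argument and the paper's coincide.

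The gap is in your blanket claim that the columns of $A$ themselves serve as a Markov basis in every case. This is false already in Case~2.0.2 when $l_1\geq 1$. There the columns of $A$ give the binomials $x_1x_5^{l_1}-x_3$, $x_2x_5^{l_2}-x_3$, $x_4-x_5$; eliminating $x_4$ and $x_3$ leaves the single relation $x_5^{l_1}\bigl(x_1-x_2x_5^{\,l_2-l_1}\bigr)=0$ in $\mathbb{C}[x_1,x_2,x_5]$, which is not a domain, so the ideal you wrote down is strictly smaller than $I_B$. The same phenomenon occurs in Cases~3.0.1/3.0.2, 3.1.3 and 3.1.4. If you compare with Table~\ref{Markov} you will see that in exactly those cases the paper replaces one column of $A$ by a suitable integer combination of columns (for instance $A_1-A_2$ in Case~2.0.2, $A_3+bA_2$ in Case~3.0.1, $A_2-A_3$ in Cases~3.1.3 and 3.1.4) so that the associated binomial has a monomial of degree one on one side and the elimination terminates in a polynomial ring.

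In short: a lattice basis of $\ker(B)$ need not be a Markov basis, and your plan to ``take the columns of $A$'' and then verify would have the verification fail in several cases. Your elimination procedure is the right diagnostic and would in fact reveal which column to replace and by what, but the proposal as written does not anticipate this and claims the unmodified columns suffice.
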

\begin{proof}
We will only give details for Case 2.0.1. Also note that the description of the matrices $A,B$ are follows the description of $\Sigma$.

Let $I=\langle x_3-x_4,x_3-x_5,x_1x_5^{l}-x_2  \rangle$ and $I_B$ be the toric ideal associated with the matrix $B$. It is enough to show that $I=I_B$. Clearly, $I\subset I_B$. Note that
\begin{align*}
    \dfrac{\mathbb{C}[x_1,x_2,x_3,x_4,x_5]}{\langle x_3-x_4,x_3-x_5,x_1x_5^{l}-x_2 \rangle}&\cong \dfrac{\mathbb{C}[x_1,x_2,x_3,x_5]}{\langle x_3-x_5,x_1x_5^{l}-x_2 \rangle}\\
    &\cong \dfrac{\mathbb{C}[x_1,x_2,x_5]}{\langle x_1x_5^{l}-x_2 \rangle}\\
    &\cong \mathbb{C}[x_1,x_5].
\end{align*}
Hence, $I$ is a prime ideal. Moreover,    dim($\mathbb{C}[x_1,...,x_5]/I)=2=$ rank($B$). Hence, $I=I_B$ by 
\cite[Lemma 4.2]{MR1363949}.

\end{proof}

\begin{table}
\tiny
\caption{Markov basis for Proposition \ref{prop6.1}}
\label{Markov}
\begin{tabular}{lll} \toprule
 \textbf{$A$} & \textbf{$B$} & \textbf{Markov basis} \\ \midrule
Case 2.0.1 &  & \\\\
$\begin{pNiceMatrix}[first-col]
\rho_1&1 & 0 &0 \\ 
\rho_2&-1 &0 &0 \\
\rho_3&0&1&0\\
\rho_4&0&0&1\\
\rho_5&l&-1&-1
\end{pNiceMatrix}$
& 
$\begin{pNiceMatrix}[first-row, first-col]
&D_1 &D_2 &D_3&D_4&D_5 \\ 
[D_2]&1 & 1&0&0&0 \\ 
[D_3]&-l & 0&1&1&1 \\
\end{pNiceMatrix}$
& 
$\begin{pNiceMatrix}
1 & 0 &0 \\ 
-1 &0 &0 \\
0&1&0\\
0&0&1\\
l&-1&-1
\end{pNiceMatrix}$
\\\\ \midrule

Case 2.0.2  & &\\\\
$\begin{pNiceMatrix}[first-col]
\rho_1&1 & 0 &0 \\ 
\rho_2&0 &1 &0 \\
\rho_3&-1&-1&0\\
\rho_4&0&0&1\\
\rho_5&l_1&l_2&-1
\end{pNiceMatrix}$
& 
$\begin{pNiceMatrix}[first-row, first-col]
&D_1 &D_2 &D_3&D_4&D_5 \\ 
[D_3]&1 & 1&1&0&0 \\ 
[D_4]&-l_1 &-l_2&0&1&1 \\
\end{pNiceMatrix}$
& 
$\begin{pNiceMatrix}
1 & 0 &0 \\ 
-1 &1 &0 \\
0&-1&0\\
0&0&1\\
l_1-l_2&l_2&-1
\end{pNiceMatrix}$
\\\\ \midrule

Case 3.0.1 and 3.0.2  & &\\\\
$\begin{pNiceMatrix}[first-col]
\rho_1&1 & 0 &0 \\ 
\rho_2&-1 &r &a \\
\rho_3&0&1&0\\
\rho_4&0&-1&b\\
\rho_5&0&0&1\\
\rho_6&0&0&-1
\end{pNiceMatrix}$
& 
$\begin{pNiceMatrix}[first-row, first-col]
&D_{1} &D_{2} &D_{3}&D_{4}&D_{5}&D_{6} \\ 
[D_{1}]&1 & 1&-r&0&-a&0 \\ 
[D_{4}]&0 & 0&1&1&-b&0\\
[D_{6}]&0&0&0&0&1&1
\end{pNiceMatrix}$
& 
$\begin{pNiceMatrix}
1 & 0 &0 \\ 
-1 &r &a+br \\
0&1&b\\
0&-1&0\\
0&0&1\\
0&0&-1
\end{pNiceMatrix}$
\\\\ \midrule

Case 3.1.1  & &\\\\
$\begin{pNiceMatrix}[first-col]
v_1&1 & 0 &0 \\ 
v_2&0 &1 &0 \\
u_1&-1&-1&b_1\\
y_1&-1&-1&b_1+1\\
t_1&0&0&1\\
z_1&0&0&-1
\end{pNiceMatrix}$
& 
$\begin{pNiceMatrix}[first-row, first-col]
&D_{v_1} &D_{v_2} &D_{u_1}&D_{y_1}&D_{t_1}&D_{z_1} \\ 
[D_{v_1}]&1 & 1&0&1&-b-1&0 \\ 
[D_{u_1}]&0 & 0&1&-1&1&0\\
[D_{z_1}]&0&0&0&0&1&1
\end{pNiceMatrix}$
& 
$\begin{pNiceMatrix}
1 & 0 &0 \\ 
0 &1 &0 \\
-1&-1&b_1\\
-1&-1&b_1+1\\
0&0&1\\
0&0&-1
\end{pNiceMatrix}$
\\\\ \midrule

Case 3.1.2  & &\\\\
$\begin{pNiceMatrix}[first-col]
v_1&1 & 0 &0 \\ 
u_1&-1 &0 &b_1 \\
y_1&-1&-1&b_1+1\\
y_2&0&1&0\\
t_1&0&0&1\\
z_1&0&0&-1
\end{pNiceMatrix}$
& 
$\begin{pNiceMatrix}[first-row,first-col]
&D_{v_1} & D_{u_1}&D_{y_1}&D_{y_2}&D_{t_1}&D_{z_1} \\ 
[D_{v_1}]&1 & 0&1&1&-b-1&0 \\ 
[D_{u_1}]&0 & 1&-1&-1&1&0\\
[D_{z_1}]&0&0&0&0&1&1
\end{pNiceMatrix}$
& 
$\begin{pNiceMatrix}
 1&0 &0 \\ 
-1&0 &b_1 \\
-1&-1&b_1+1\\
-0&1&0\\
0&0&1\\
0&0&-1
\end{pNiceMatrix}
$
\\\\ \midrule

Case 3.1.3  & &\\\\
$\begin{pNiceMatrix}[first-col]
v_1&1 & 0 &0 \\ 
u_1&-1 &b_1 &c_2 \\
y_1&-1&b_1+1&c_2\\
t_1&0&1&0\\
z_1&0&-1&-1\\
z_2&0&0&1
\end{pNiceMatrix}$
& 
$\begin{pNiceMatrix}[first-row,first-col]
&D_{v_1} & D_{u_1}&D_{y_1}&D_{t_1}&D_{z_1}&D_{z_2} \\ 
[D_{v_1}]&1 & 0&1&-b_1-1&0&-c_2 \\ 
[D_{u_1}]&0 & 1&-1&1&0&0\\
[D_{z_1}]&0&0&0&1&1&1
\end{pNiceMatrix}$
& 
$\begin{pNiceMatrix}
1 & 0 &0 \\ 
-1 &b_1 &b_1-c_2 \\
-1&b_1+1&b_1+1-c_2\\
0&1&1\\
0&-1&0\\
0&0&-1
\end{pNiceMatrix}
$
\\\\ \midrule

Case 3.1.4  & &\\\\
$\begin{pNiceMatrix}[first-col]
v_1&1 & 0 &0 \\ 
u_1&-1 &b_1 &b_2 \\
y_1&-1&b_1+1&b_2+1\\
t_1&0&1&0\\
t_2&0&0&1\\
z_1&0&-1&-1
\end{pNiceMatrix}$
& 
$\begin{pNiceMatrix}[first-row,first-col]
&D_{v_1} & D_{u_1}&D_{y_1}&D_{t_1}&D_{t_2}&D_{z_1} \\ 
[D_{v_1}]&1 & 0&1&-b_1-1&-b_2-1&0 \\ 
[D_{u_1}]&0 & 1&-1&1&1&0\\
[D_{z_1}]&0&0&0&1&1&1
\end{pNiceMatrix}$
& 
$\begin{pNiceMatrix}
1 & 0 &0 \\ 
-1 &b_1 &b_1-b_2 \\
-1&b_1+1&b_1-b_2\\
0&1&1\\
0&0&-1\\
0&-1&0
\end{pNiceMatrix}
$
\\\\ \midrule

Case 3.1.5  & &\\\\
$\begin{pNiceMatrix}[first-col]
v_1&1 & 0 &0 \\ 
u_1&-1 &-1 &b_1 \\
u_2&0&1&0\\
y_1&-1&0&b_1+1\\
t_1&0&0&1\\
z_1&0&0&-1
\end{pNiceMatrix}$
& 
$\begin{pNiceMatrix}[first-row,first-col]
&D_{v_1} & D_{u_1}&D_{u_2}&D_{y_1}&D_{t_1}&D_{z_1} \\ 
[D_{v_1}]&1 & 0&0&1&-b_1-1&0 \\ 
[D_{u_1}]&0 & 1&1&-1&1&0\\
[D_{z_1}]&0&0&0&0&1&1
\end{pNiceMatrix}$
& 
$\begin{pNiceMatrix}
1 & 0 &0 \\ 
-1 &-1 &b_1 \\
0&1&0\\
-1&0&b_1+1\\
0&0&1\\
0&0&-1
\end{pNiceMatrix}
$
\\\\

\bottomrule
\end{tabular}

\end{table}

\begin{figure}[ht]
\centering
\resizebox{0.7\textwidth}{!}{%
\begin{subfigure}{.5\textwidth}
  \centering
  \begin{tikzpicture}[scale=0.5]
\draw[thick,-] (-.5,0)--(5,0);
\draw[thick,-] (0,-.5)--(0,5);
\fill[gray] (0,0) rectangle (4,4);
\filldraw[black] (3,0) circle (2pt) node[anchor=north] {$[D_2]$};
\filldraw[black] (0,3) circle (2pt) node[anchor=east] {$[D_3]$};
\end{tikzpicture}
  \caption{Nef cone}
  
\end{subfigure}%
\begin{subfigure}{.5\textwidth}
  \centering
  \begin{tikzpicture}[scale=0.5]
    \draw[thick,-] (-.5,0)--(5,0);
    \draw[thick,-] (0,-.5)--(0,5);
    \draw[thick,-] (0,0)--(5,-5);
     \fill[gray] (0,0) rectangle (4,4);
    \draw[fill=gray] (0,0)--(4,0)--(4,-4);
    \filldraw[black] (3,0) circle (2pt) node[anchor=north] {$[D_2]$};
    \filldraw[black] (0,3) circle (2pt) node[anchor=east] {$[D_3]$};
    \filldraw[black] (3,-3) circle (2pt) node[anchor=east] {$[D_1]$};
   
    \end{tikzpicture}
  \caption{Effective cone}
  
\end{subfigure}
}%
\caption{Nef and effective cone of $X_\Sigma\cong \mathbb{P}(\mathcal{O}_{\mathbb{P}^2}\oplus \mathcal{O}_{\mathbb{P}^2}(l))$ }
\label{fig:test1}
\end{figure}
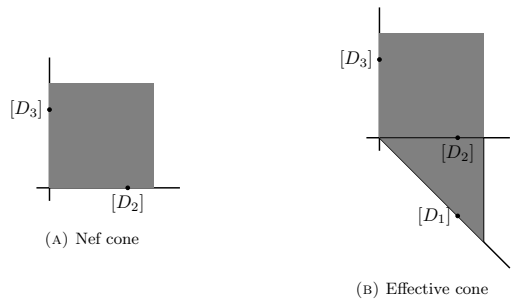

\begin{proposition}\label{proposition6.2}
Let $X_\Sigma$ be a smooth complete toric threefold with Picard rank 2 or 3. In Table \ref{Pic}, we collect the basis of Picard group $\emph{Pic}(X_\Sigma)$, primitive ray generators of nef cone $\emph{Nef}(X_{\Sigma})$ and effective cone $\emph{Eff}(X_{\Sigma})$, and in the last column we give a representative for the canonical divisor $K_{X_{\Sigma}}$.
\end{proposition}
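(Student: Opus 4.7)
The plan is to carry out the computation case by case, relying on the short exact sequence (\ref{exact2}) together with the matrix data already compiled in Proposition \ref{prop6.1}. For each case the matrix $B$ from Table \ref{Markov} is a presentation of the map $\pi : \mathbb{Z}^{\Sigma(1)} \to \operatorname{Pic}(X_\Sigma)$ with respect to an explicit choice of basis; the rows of $B$ are labelled by torus-invariant divisors that form a $\mathbb{Z}$-basis of $\operatorname{Pic}(X_\Sigma)$, so the first column of Table \ref{Pic} is read off directly. The canonical divisor will be recorded using the standard formula $K_{X_\Sigma} = -\sum_{\rho \in \Sigma(1)} D_\rho$ and then re-expressed in the chosen basis by using the columns of $B$ to eliminate the non-basis divisors. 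This last step is purely linear algebra.

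For the nef cone, I would apply Theorem \ref{theoremconevex}: a torus-invariant divisor $D = \sum a_\rho D_\rho$ is nef if and only if $\varphi_D(u_{\rho_1}+\cdots+u_{\rho_k}) \geq \varphi_D(u_{\rho_1}) + \cdots + \varphi_D(u_{\rho_k})$ holds for every primitive collection $\mathscr{P} = \{\rho_1,\ldots,\rho_k\}$. For each of the nine fans the primitive collections are already listed (in Theorems \ref{fanrank2}, \ref{fan3}, \ref{Batyrev}), so each primitive collection produces one linear inequality on the Picard-group coordinates of $D$. Intersecting these finitely many half-spaces cuts out $\operatorname{Nef}(X_\Sigma)$, and reading off the extremal rays gives the required generators. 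The fact that $X_\Sigma$ is projective (hence nef divisors exist in abundance) together with Picard rank $\leq 3$ ensures that the nef cone is a full-dimensional simplicial or slightly-more-complicated cone whose rays can be described explicitly.

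For the effective cone, I would use that $\operatorname{Eff}(X_\Sigma)$ is generated by the classes $[D_\rho]$ of the torus-invariant prime divisors. Using $B$ each $[D_\rho]$ can be written in the chosen basis, and then the effective cone is the convex hull (over $\mathbb{R}_{\geq 0}$) of these finitely many vectors. In most cases several $[D_\rho]$ coincide or are positive combinations of others, so only a small number of extremal rays survive. The pictures such as Figure \ref{fig:test1} for Case~2.0.1 illustrate this: the classes $[D_2], [D_3], [D_1]$ already generate the effective cone, while $[D_4], [D_5]$ lie in its interior or on a generating ray.

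The main obstacle is really bookkeeping rather than mathematical difficulty: in Case 3.0.1 versus Case 3.0.2 the sign of $b$ changes which of the inequalities from the primitive collection $\{\rho_3,\rho_4\}$ is active, so the extremal ray structure of $\operatorname{Nef}$ and $\operatorname{Eff}$ differs, and similarly the parameters $b_1, b_2, c_2$ in Cases 3.1.1--3.1.5 require careful case analysis of signs to determine which $[D_\rho]$ are extremal. Apart from that sign analysis, every computation reduces to solving a small system of linear inequalities from Theorem \ref{theoremconevex} and rewriting the divisor $-\sum D_\rho$ in the chosen basis; I would organize the work case by case following exactly the labelling scheme in Section~4 and present the outcome in Table \ref{Pic}.
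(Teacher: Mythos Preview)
Your proposal is correct and follows essentially the same approach as the paper: the paper also works out only Case~2.0.1 in detail, using the relations from the exact sequence (\ref{exact2}) to identify a basis of $\operatorname{Pic}(X_\Sigma)$, Theorem~\ref{theoremconevex} applied to the listed primitive collections to determine $\operatorname{Nef}(X_\Sigma)$, the classes $[D_\rho]$ to determine $\operatorname{Eff}(X_\Sigma)$, and the formula $K_{X_\Sigma}=-\sum_\rho D_\rho$ rewritten in the chosen basis. Your remarks about the sign-dependent case splits in Cases~3.0.2, 3.1.3, and 3.1.4 are also accurate and match what the paper records in Table~\ref{Pic}.
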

\begin{proof}
We will only give details for Case 2.0.1. By (\ref{exact2}),
the Picard group is generated by the classes of $D_i$ subject to the following relations:
\begin{equation}\label{eq1}
\begin{split}
 D_1-D_2+lD_3 &\sim 0\\
    D_3-D_5 &\sim 0\\
    D_4-D_5 &\sim 0. 
\end{split}    
\end{equation}
Thus, $\Pic(X)= \mathbb{Z}[D_2] \bigoplus \mathbb{Z}[D_3]$.
Let $D=aD_2+bD_3$. 
If $D$ is nef then we have (Theorem \ref{theoremconevex}
), 

\begin{equation*}
\begin{split}
    \varphi_D(0)&\geq   \varphi_D(e_1)+\varphi_D(-e_1)\\
    \varphi_D(le_1)&\geq   \varphi_D(e_2)+\varphi_D(e_3)+\varphi_D(le_1-e_2-e_3)
\end{split}    
\end{equation*}
and it follows that $a,b\geq0$. Thus, the nef cone is generated by the classes of $D_2$ and $D_3$. Using (\ref{eq1}), one can easily see that the effective cone is generated by the classes of $D_1$ and $D_3$. See Figure \ref{fig:test1}.
The canonical divisor of $X_{\Sigma}$ is given by (\cite[Theorem 8.2.3]{MR2810322})
\begin{align*}
    K_{X}&=-D_1-D_2-D_3-D_4-D_5\\
    &\sim -2D_2+(l-3)D_3.
\end{align*}
\end{proof}

\begin{table}
\tiny
\caption{Basis of Picard group $\text{Pic}(X_\Sigma)$, primitive ray generators of nef cone $\text{Nef}(X_{\Sigma})$, effective cone $\text{Eff}(X_{\Sigma})$ and a representative for the canonical divisor $K_{X_{\Sigma}}$ for Proposition \ref{proposition6.2}
}
\label{Pic}
\begin{tabular}{lllll} \toprule
%\multicolumn{5}{c}{Basis of} \\
% align: l,c,r
%\cmidrule(r){2-4}
\\
\textbf{$X_{\Sigma}$} & \textbf{$\Pic(X_{\Sigma})$} & \textbf{$\Nef(X_{\Sigma})$} &\textbf{$\Eff(X_{\Sigma})$} & \textbf{$K_{X_{\Sigma}}$}\\\\ \midrule
\\
Case 2.0.1  & $D_2,D_3$  & $D_2,D_3$ & $D_1,D_3$ & $-2D_2+(l-3)D_3$  \\
\midrule
\\
Case 2.0.2 & $D_3,D_4$  & $D_3,D_4$ & $D_2,D_4$ & $-3D_3+(l_1+l_2-2)D_4$ \\
\midrule
\\
Case 3.0.1 & $D_1,D_4,D_6$  & $D_1,D_4,D_6$ & $D_1,D_3,D_5$ & $(-2+a+r)D_1+(-2+b)D_4-2D_6$ \\
\midrule
\\
\multirow{2}{*}{Case 3.0.2} & \multirow{2}{*}{$D_1,D_4,D_6$}  & \multirow{2}{*}{$D_1,D_4,D_6-bD_4$} & $D_1,D_3,D_6$ if $b<0$  & \multirow{2}{*}{$(-2+a+r)D_1+(-2+b)D_4-2D_6$} \\\\
 &   &  & and $a+br\leq0$ &  \\\\

&  & & $D_1,D_3,D_5,D_6$ if $b<0$&  \\\\
&  & &  and $a+br>0$ &  \\\\
\midrule
\\
Case 3.1.1 & $D_{v_1},D_{z_1},D_{u_1}$  & $D_{v_1},D_{z_1},D_{u_1}+D_{z_1}$ & $D_{u_1},D_{y_1},D_{t_1}$ & $(b_1-2)D_{v_1}-D_{u_1}-2D_{z_1}$ \\
\midrule
\\
Case 3.1.2 & $D_{v_1},D_{z_1},D_{u_1}$  & $D_{v_1},D_{z_1},D_{u_1}+D_{z_1}$ & $D_{u_1},D_{y_1},D_{t_1}$ & $(b-2)D_{v_1}-2D_{z_1}$ \\
\midrule
\\
\multirow{2}{*}{Case 3.1.3} & \multirow{2}{*}{$D_{v_1},D_{z_1},D_{u_1}$}  & \multirow{2}{*}{$D_{v_1},D_{z_1},D_{u_1}+D_{z_1}$} & $D_{u_1},D_{y_1},D_{t_1}$ if $b_1\geq c_2$ & \multirow{2}{*}{$(b_1+c_2-1)D_{v_1}--D_{u_1}-3D_{z_1}$} \\\\
 &   &  & $D_{u_1},D_{y_1},D_{z_2}$ if $b_1< c_2$ &  \\
 \midrule
 \\
\multirow{2}{*}{Case 3.1.4} & \multirow{2}{*}{$D_{v_1},D_{z_1},D_{u_1}$}  & \multirow{2}{*}{$D_{v_1},D_{z_1},D_{u_1}+D_{z_1}$} & $D_{u_1},D_{y_1},D_{t_1}$ if $b_1\geq b_2$ & \multirow{2}{*}{$(b_1+b_2)D_{v_1}-2D_{u_1}-3D_{z_1}$ }\\\\
 &   &  & $D_{u_1},D_{y_1},D_{t_2}$ else & \\
\midrule 
\\
Case 3.1.5 & $D_{v_1},D_{z_1},D_{u_1}$  & $D_{v_1},D_{z_1},D_{u_1}+D_{z_1}$ & $D_{u_1},D_{y_1},D_{t_1}$ & $(b_1-1)D_{v_1}-2D_{u_1}-2D_{z_1}$ \\\\
\bottomrule
\\
\end{tabular}

\end{table}

\begin{theorem}\label{first}
Let $X_\Sigma$ be a smooth complete toric threefold with Picard rank 2 or 3 and let $S$ be a very general surface of class $D$ in $X_{\Sigma}$. A classification of cases when $S$ is algebraically hyperbolic is found in Tables \ref{tab:long2},\ref{tab:long3},\ref{tab:long4}.
\end{theorem}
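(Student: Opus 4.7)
The plan is to apply Corollary \ref{corollary111} separately in each of the nine cases from Section 4. Fix a very general $S \in |D|$ and write $D = \sum_i a_i [D_i]$ in the Picard basis recorded in Proposition \ref{proposition6.2}, restricting to $D$ that is big and nef (read off from $\Nef(X_\Sigma)$ and $\Eff(X_\Sigma)$ in Table \ref{Pic}); when $D + K_{X_\Sigma}$ is basepoint free I also invoke Theorem \ref{Theorem5.17} to ensure that $\Pic(X_\Sigma) \to \Pic(S)$ is an isomorphism so that no exotic classes of curves appear. I then verify the two hypotheses of the corollary: (1) every boundary curve on $S$ has geometric genus at least two, and (2) there exist an ample $H$ and $\epsilon_0 > 0$ such that $2g(C) - 2 \geq \epsilon_0 (C \cdot H)$ for every curve $C \subset S$ not in the toric boundary.

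\textbf{Boundary and interior bounds.} For (1), Lemma \ref{boundarycurves} reduces the genus of each boundary curve $S \cap D_\rho$ to the number of interior lattice points of the face of $P(D)$ on which $\rho$ attains its minimum. Using the explicit ray generators of Theorems \ref{fanrank2}, \ref{fan3}, \ref{Batyrev}, the polytope $P(D)$ is an explicit function of the coefficients $a_i$ and of the structural parameters $(l, l_1, l_2, r, a, b, b_1, b_2, c_2)$; the condition ``at least two interior lattice points on every relevant face'' becomes a finite list of linear inequalities in the $a_i$, which is the content of Lemma \ref{lemma7.1}. For (2), given the IDP guaranteed by Theorem \ref{IDPtheorem}, I apply Proposition \ref{connectedsection}: the task is to exhibit, in each case, a nef divisor $E$ with $D - E$ nef and such that the set $\mathcal{G} = (i(P(E)) \cap \mathbb{Z}^r) - (i(P(E)) \cap \mathbb{Z}^r)$ contains a Markov basis for $I_B$. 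Since the minimal Markov bases are precisely the columns listed in Proposition \ref{prop6.1}, it suffices to take $E$ as a suitable positive combination of the primitive generators of $\Nef(X_\Sigma)$ large enough for each Markov vector to appear as a required difference; this construction is packaged as Lemma \ref{lemma7.2}. Feeding this into Theorem \ref{Theorem5.14} gives $2g(C) - 2 \geq C \cdot (E + K_{X_\Sigma})$ for every non-boundary curve $C$, and choosing an ample $H$ so that $E + K_{X_\Sigma} - \epsilon_0 H$ remains nef for some $\epsilon_0 > 0$ produces condition (2).

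\textbf{Main obstacle.} The hard part will be balancing the Markov-basis constraints on $E$ against the positivity requirement on $E + K_{X_\Sigma}$: the canonical classes in Table \ref{Pic} carry sizeable negative contributions from the parameters $l, l_i, b, b_i, c_2, r$, so $E$ must be chosen as close to the minimal connected-sections witness as possible while $D$ must be large enough that $D - E$ remains nef and $E + K_{X_\Sigma}$ dominates a positive multiple of an ample $H$. This balancing generates the many subcases tabulated in Tables \ref{tab:long2}, \ref{tab:long3}, \ref{tab:long4}, and is the reason a thin band of borderline coefficient vectors $(a_i)$ will remain undecided, giving the ``almost completely classify'' of the abstract. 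A secondary obstacle is Case 3.0.2, where $\Nef(X_\Sigma)$ is generated by $D_6 - bD_4$ rather than $D_6$; both the Markov-basis verification and the polytope $P(D)$ must be rewritten in this modified basis before the strategy above can be executed cleanly.
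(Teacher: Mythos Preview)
Your overall strategy matches the paper's proof exactly: case-by-case application of Corollary~\ref{corollary111}, with the boundary genera handled by Lemma~\ref{lemma7.1} and the interior bound coming from Theorem~\ref{Theorem5.14} via the connected-sections configurations of Lemma~\ref{lemma7.2}, together with Noether--Lefschetz (Theorem~\ref{Theorem5.17}) to control the curve classes on $S$.

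There is, however, a genuine mix-up in how you invoke Proposition~\ref{connectedsection} and Theorem~\ref{Theorem5.14} together, and if you carried it out as written the argument would not close. In Proposition~\ref{connectedsection} the Markov basis is extracted from $P(E')$, and the conclusion is that the configuration $(E+E',E)$ has connected sections; so when you apply Theorem~\ref{Theorem5.14} with $D=E+E'$, the bound is $2g-2\geq C\cdot(E+K_{X_\Sigma})$ where $E=D-E'$. In your write-up you have the \emph{same} divisor $E$ both supplying the Markov basis and appearing in the bound $E+K_{X_\Sigma}$, and you describe it simultaneously as ``large enough for each Markov vector to appear'' and ``as close to the minimal connected-sections witness as possible''. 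These are incompatible roles. What the paper actually does (and what you need) is to fix a \emph{small} $E'$---typically a single nef generator such as $D_2$, $D_3$, $D_6$, or $D_{z_1}$ as in Table~\ref{tab:conn}---whose polytope already contains enough lattice points to realise the Markov basis of Proposition~\ref{prop6.1}; then $E=D-E'$ is \emph{large} (it grows with $D$), and it is this large $E$ that makes $E+K_{X_\Sigma}$ eventually dominate an ample class. Once you swap the roles back, your ``main obstacle'' paragraph becomes accurate and the rest of the outline goes through as in the paper.
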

\begin{remark}
The algebraic hyperbolicity of very general surfaces in some of the smooth complete toric threefolds with Picard rank 2 or 3 are previously studied by Haase and Ilten \cite{1903.02681} and Coskun and Riedl \cite{coskun2019algebraic}.
When $l=0$ in Case 2.0.1 or $l_1=l_2=0$ in Case 2.0.2 then $X_{\Sigma}\cong \mathbb{P}^2 \times \mathbb{P}^1$. See the results in {\cite[Example 6.1]{1903.02681}}, {\cite[Section 3.2]{coskun2019algebraic}}.
When $l=1$ in Case 2.0.1 then $X_{\Sigma}$ is isomorphic to the blow-up of $\mathbb{P}^3$ at a point. See the results in {\cite[Example 6.4]{1903.02681}}, {\cite[Section 3.4]{coskun2019algebraic}}. 
When $r=a=b=0$ in Case 3.0.1 then $X_{\Sigma}\cong \mathbb{P}^1 \times \mathbb{P}^1 \times \mathbb{P}^1$. See the results in {\cite[Example 6.3]{1903.02681}}, {\cite[Section 3.1]{coskun2019algebraic}}.
When $a=b=0$ and $r\geq1$ in Case 3.0.1 then $X_{\Sigma}\cong {\mathcal{H}_r} \times \mathbb{P}^1$. See the results in  {\cite[Section 3.3]{coskun2019algebraic}}.
\end{remark}

We will prove Theorem \ref{first} in the next section.

%-------------------------------------------------------------------------------Figure algebraic hyperbolicity-------------------------------------------------------------------------------------------------------------
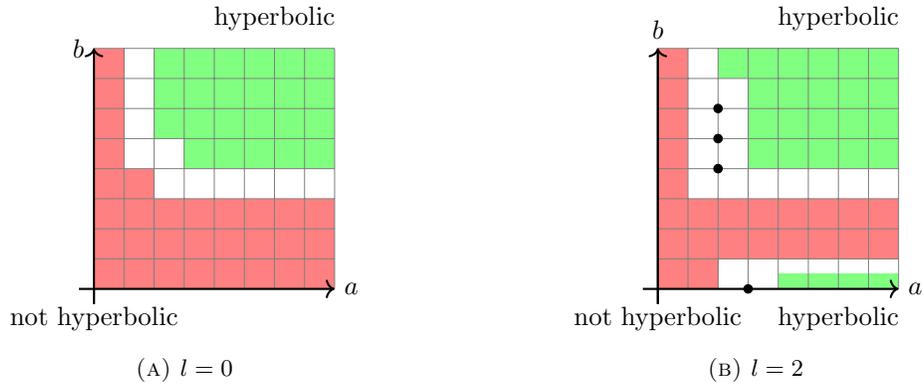
\begin{figure}[htp!]

\begin{subfigure}{.4\textwidth}
    \centering    
    \begin{tikzpicture}[scale=.4]
    \draw[thick,-] (0,-.5)--(0,8);
    \fill[red!50] (0,0) rectangle (1,8);
     \fill[red!50] (0,0) rectangle (2,4);
    \fill[red!50] (0,0) rectangle (8,3);
    \fill[green!50] (2,5) rectangle (8,8);
    \fill[green!50] (3,4) rectangle (8,8);
    \draw[step=1cm,gray,very thin] (0,0) grid (8,8);
    
    \draw[thick,->] (-.5,0)--(8,0) node(xaxis) [right] {$a$};
    \draw[thick,->] (0,-.5)--(0,8) node (yaxis) [left] {$b$};
    \node at (6,9){hyperbolic};
    \node at (0,-1){not hyperbolic};
    \end{tikzpicture}
    \caption{$l=0$}
    %\label{fig:sub3}
\end{subfigure}\hfill
\begin{subfigure}{.4\textwidth}

    \begin{tikzpicture}[scale=.4]
    \draw[thick,-] (0,-.5)--(0,8);
    \fill[red!50] (0,0) rectangle (1,8);
    \fill[red!50] (0,1) rectangle (8,3);
    \fill[red!50] (1,0) rectangle (2,1);
    \fill[green!50] (3,4) rectangle (8,8);
    \fill[green!50] (4,0) rectangle (8,.5);
    \fill[green!50] (2,7) rectangle (8,8);
    \draw[step=1cm,gray,very thin] (0,0) grid (8,8);
    \filldraw[black] (2,4) circle (4pt);
    %\filldraw[black] (2,0) circle (4pt);
    \filldraw[black] (3,0) circle (4pt);
    \filldraw[black] (2,5) circle (4pt);
    \filldraw[black] (2,6) circle (4pt);
    \draw[thick,->] (-.5,0)--(8,0) node(xaxis) [right] {$a$};
    \draw[thick,->] (0,-.5)--(0,8) node (yaxis) [above] {$b$};
     \node at (6,9){hyperbolic};
     \node at (0,-1){not hyperbolic};
     \node at (6,-1){hyperbolic};
    \end{tikzpicture}
    \caption{$l=2$}
    %\label{fig:sub4}
\end{subfigure}

    \caption{Algebraic hyperbolicity for a very general surface of the type $aD_2+bD_3$ in $X_\Sigma\cong \mathbb{P}(\mathcal{O}_{\mathbb{P}^2}\oplus \mathcal{O}_{\mathbb{P}^2}(l))$, for $l=0$ and 2.}
\label{fig:my_label_1}

\end{figure}

%---------------------------------------------------------------------------------------------------------------------------------------
%------------------------------------------------------------------------------------------------------------------------------------------------------------------
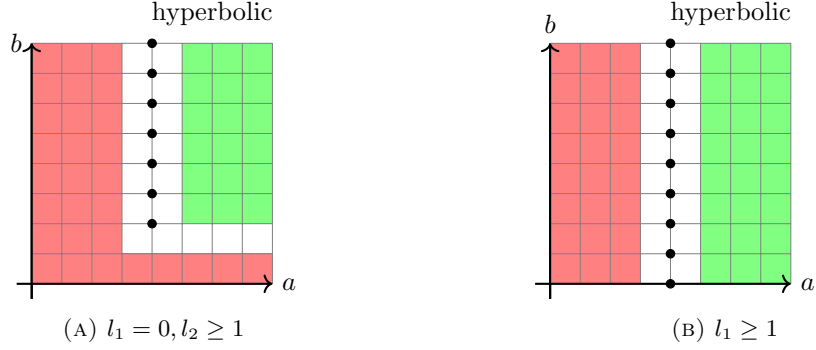
\begin{figure}[ht]

\begin{subfigure}{.4\textwidth}
    \centering    
    \begin{tikzpicture}[scale=.4]
    \draw[thick,-] (0,-.5)--(0,8);
    \fill[red!50] (0,0) rectangle (3,8);
    \fill[red!50] (3,0) rectangle (8,1);
    \fill[green!50] (5,2) rectangle (8,8);
    \draw[step=1cm,gray,very thin] (0,0) grid (8,8);
    \filldraw[black] (4,2) circle (4pt);
    \filldraw[black] (4,3) circle (4pt);
    \filldraw[black] (4,4) circle (4pt);
    \filldraw[black] (4,5) circle (4pt);
    \filldraw[black] (4,6) circle (4pt);
     \filldraw[black] (4,7) circle (4pt);
    \filldraw[black] (4,8) circle (4pt);
    \draw[thick,->] (-.5,0)--(8,0) node(xaxis) [right] {$a$};
    \draw[thick,->] (0,-.5)--(0,8) node (yaxis) [left] {$b$};
    \node at (6,9){hyperbolic};
    \end{tikzpicture}
    \caption{$l_1=0,l_2\geq1$}
    \label{fig:sub3}
\end{subfigure}\hfill
\begin{subfigure}{.4\textwidth}

    \begin{tikzpicture}[scale=.4]
    \draw[thick,-] (0,-.5)--(0,8);
    \fill[red!50] (0,0) rectangle (3,8);
    \fill[green!50] (5,0) rectangle (8,8);
    \draw[step=1cm,gray,very thin] (0,0) grid (8,8);
    \filldraw[black] (4,0) circle (4pt);
    \filldraw[black] (4,1) circle (4pt);
    \filldraw[black] (4,2) circle (4pt);
    \filldraw[black] (4,3) circle (4pt);
    \filldraw[black] (4,4) circle (4pt);
     \filldraw[black] (4,5) circle (4pt);
    \filldraw[black] (4,6) circle (4pt);
    \filldraw[black] (4,7) circle (4pt);
    \filldraw[black] (4,8) circle (4pt);
    \draw[thick,->] (-.5,0)--(8,0) node(xaxis) [right] {$a$};
    \draw[thick,->] (0,-.5)--(0,8) node (yaxis) [above] {$b$};
     \node at (6,9){hyperbolic};
    \end{tikzpicture}
    \caption{$l_1\geq1$}
    \label{fig:sub4}
\end{subfigure}

    \caption{Algebraic hyperbolicity for a very general surface of the type $aD_3+bD_4$ in $X_{\Sigma} \cong \mathbb{P}(\mathcal{O}_{\mathbb{P}^1}\oplus \mathcal{O}_{\mathbb{P}^1}(l_1)\oplus \mathcal{O}_{\mathbb{P}^1}(l_2))$}
\label{fig:my_label_2}
\end{figure}

%---------------------------------------------------------------------------------------------------------------------------------------

{
\newpage
\tiny
\begin{longtable}{llllll}
\caption{Algebraic hyperbolicity of a very general surface $S$ in Theorem \ref{first}, when $X_{\Sigma}$ is a smooth complete toric threefold with  Picard rank 2 }
\label{tab:long2} \\
\toprule
\multicolumn{1}{c}{$X_{\Sigma}$}&
\multicolumn{1}{c}{$D$} & \multicolumn{1}{c}{\text{Extra conditions}} & \multicolumn{1}{c}{\text{Hyperbolic if}} &
\multicolumn{1}{c}{\text{ Not Hyperbolic if}}&
\multicolumn{1}{c}{\text{ Open case if}}\\\\
&& & \multicolumn{1}{c}{$(a,b)$} &
\multicolumn{1}{c}{$(a,b)$}&
\multicolumn{1}{c}{$(a,b)$}
\\\\ 
\hline 
\\

\multirow{23}{*}{Case 2.0.1} & \multirow{23}{*}{$aD_2+bD_3$} & \multirow{3}{*}{$l=0$\footnotemark[1]}  & \tabitem $(\geq3,\geq4)$ & \tabitem $(\leq1,\geq0)$ & \text{No case}   \\\\
& & & \tabitem $(=2,\geq5)$&  \tabitem $(\geq0,\leq3)$ & \\\\
& & & &  \tabitem $=(2,4)$ & \\\\

\cmidrule(r){3-6}

 & & \multirow{5}{*}{$l=1$\footnotemark[1]}  & \tabitem $(\geq3,\geq4)$ & \tabitem $(\leq1,\geq0)$ & \text{No case}   \\\\
& & &\tabitem $(=2,\geq5)$  & \tabitem $(\geq0,\in \{1,2,3\})$ & \\\\
& & & \tabitem $(\geq5,=0)$ &  \tabitem $(\leq4,=0)$ & \\\\
\cmidrule(r){3-6}

 & & \multirow{5}{*}{$l=2$}  & \tabitem $(\geq3,\geq4)$ & \tabitem $(\leq1,\geq0)$ & \tabitem $(=2,\in\{4,5,6\})$   \\\\
& & &\tabitem $(=2,\geq7)$  & \tabitem $(\geq0,\in \{1,2,3\})$ & \tabitem $=(3,0)$ \\\\
& & & \tabitem $(\geq4,=0)$ &  \tabitem $=(2,0)$ & \\\\
\cmidrule(r){3-6}

 & & \multirow{5}{*}{$l=3$}  & \tabitem $(\geq3,\geq4)$ & \tabitem $(\leq1,\geq0)$ & \tabitem $(=2,\in \{4,5,6\})$   \\\\
& & &\tabitem $(=2,\geq7)$  & \tabitem $(\geq0,\in \{1,2,3\})$ & \tabitem $(\in\{2,3\},=0)$ \\\\
& & & \tabitem $(\geq4,=0)$ &   & \\\\
\cmidrule(r){3-6}

 & & \multirow{5}{*}{$l\geq4$}  & \tabitem $(\geq3,\geq4)$ & \tabitem $(\leq1,\geq0)$ & \tabitem $(=2,\in\{4,5,6\})$   \\\\
& & &\tabitem $(=2,\geq7)$  & \tabitem $(\geq0,\in \{1,2,3\})$ & \tabitem $=(2,0)$ \\\\
& & & \tabitem $(\geq3,=0)$ &   & \\\\

\midrule

\multirow{5}{*}{Case 2.0.2} & \multirow{5}{*}{$aD_3+bD_4$} & \multirow{3}{*}{$l_1=l_2=0$\footnotemark[1]}  & \tabitem $(\geq4,\geq3)$ & \tabitem $(\leq3,\geq0)$ & \text{No case}   \\\\
& & & \tabitem $(\geq5,=2)$ &  \tabitem $(\geq0,\leq1)$ & \\\\

& & &  &  \tabitem $=(4,2)$ & \\\\
\cmidrule(r){3-6}

& & {$l_1=0,l_2\geq 1$}  & \tabitem $(\geq5,\geq2)$ & \tabitem $(\leq3,\geq0)$& \tabitem $(=4,\geq2)$   \\\\ 
& &   &  & \tabitem $(\geq0,\leq1)$ &    \\\\ 
\cmidrule(r){3-6}

& & {$l_1\geq1$}  & \tabitem$(\geq5,\geq0)$  & \tabitem $(\leq3,\geq0)$ & \tabitem $(=4,\geq 0)$   \\\\

\bottomrule
\end{longtable}
}
\footnotetext[1]{These cases are previously studied.}

%----------------------------------------------------------------------------------------------------------------------------------------

%--------------------------------------------------------------------------------------------------------

{\tiny
\begin{longtable}{lllll}
\caption{Algebraic hyperbolicity of a very general surface $S$ in Theorem \ref{first}, when $X_{\Sigma}$ is a smooth complete toric threefold with  Picard rank 3 and $\Sigma$ is a splitting fan 
}
 \label{tab:long3} \\
\hline 
\\
\multicolumn{1}{c}{\textbf{$X_{\Sigma}$ and $D$}} & \multicolumn{1}{c}{\textbf{Extra conditions}} & \multicolumn{1}{c}{\textbf{Hyperbolic if}} &
\multicolumn{1}{c}{\textbf{ Not Hyperbolic if}}&
\multicolumn{1}{c}{\textbf{ Open case if}}
\\\\ 

 & \multicolumn{1}{c}{$(r,a,b)$} & \multicolumn{1}{c}{$(d,e,f)$} &
\multicolumn{1}{c}{$(d,e,f)$}&
\multicolumn{1}{c}{$(d,e,f)$}
\\\\
\hline

\endfirsthead

\multicolumn{5}{c}%
{{\bfseries \tablename\ \thetable{} -- continued from previous page}} \\\\ \hline 
\\
\multicolumn{1}{c}{\textbf{$X_{\Sigma}$ and $D$}} & \multicolumn{1}{c}{\textbf{Extra conditions}} & \multicolumn{1}{c}{\textbf{Hyperbolic if}} &
\multicolumn{1}{c}{\textbf{ Not Hyperbolic if}}&
\multicolumn{1}{c}{\textbf{ Open case if}}
\\\\
& \multicolumn{1}{c}{$(r,a,b)$} & \multicolumn{1}{c}{$(d,e,f)$} &
\multicolumn{1}{c}{$(d,e,f)$}&
\multicolumn{1}{c}{$(d,e,f)$}
\\\\ \hline 
\\
\endhead

\hline \multicolumn{5}{|r|}{{Continued on next page}} \\ \hline
\endfoot

\hline \hline
\endlastfoot

\\
Case 3.0.1 &  \multirow{2}{*}{$=(0,0,0)$\footnotemark[2]} & \tabitem $( \geq3,\geq3,\geq3)$\footnotemark[3]  & \tabitem $(\leq1,\geq0,\geq0)$\footnotemark[3] &   \\\\
 $dD_1+eD_4+fD_6$& &\tabitem $(2,\geq4,\geq4)$\footnotemark[3]  & \tabitem $(=2,\leq3,\geq0)$ & No case \\\\
\cmidrule(r){2-5}

 & \multirow{2}{*}{$(\geq1,=0,=0)$\footnotemark[2]}& \tabitem $(\geq 2,\geq3,\geq3)$ & \tabitem $(\leq1,\geq0,\geq0)$\footnotemark[3] & \\\\
 & & \tabitem $(\geq 3,\geq4,=2)$ & \tabitem $(\geq0,=2,\in\{2,3\})$ & No case \\\\
 & &\tabitem $(\geq 2+\delta_{r,1},=2,\geq4)$  & \tabitem $(\geq0,=3,=2,)$ & \\\\
 & &  & \tabitem $(=2,\geq0,=2)$ & \\\\
 & &  & \tabitem $(=2,=2,\geq1)$ if $r=1$ & \\\\
\cmidrule(r){2-5}

%-------------------------------------------------------------------------------------------------------

 & \multirow{2}{*}{General}& If $e,f\geq2$ and not & \tabitem $(\leq0,\leq1,\geq0)$ & \\\\
 & & in the next column & \tabitem $(\geq0,\geq0,\leq1)$ & \footnotemark[4] \\\\

 & & \tabitem $(\geq 4-a-r,\geq4-b,\geq3)$ &\tabitem $(\geq0,=2,=2)$ if $b=0$  & \\\\

 & & \tabitem $(\geq 4-a-r ,\geq3-b,\geq4)$ &\tabitem $(=1,\geq0,\geq0)$ if $a=0$  & \\\\

 & & \tabitem $(\geq 3-a-r,\geq4-b,\geq4)$ & \tabitem $(=2,\geq0,=2)$ if $a=0$ & \\\\

 & &  & \tabitem $(\leq1,\geq0,=2)$ if $a=1$ & \\\\
 
 & &  & \tabitem $(=0,\geq0,=2)$ if $a=2$ & \\\\

 & &  & \tabitem $(\leq1,\geq0,\geq0)$ if $r=0$ & \\\\

 & &  & \tabitem $(=2,=2,\geq0)$ if $r=0$ & \\\\

 & &  & \tabitem $(\leq1,=2,\geq0)$ if $r=1$ & \\\\
 & &  & \tabitem $(=0,=2,\geq0)$ if $r=2$ & \\\\\\

\cmidrule(r){2-5}

& \multirow{2}{*}{$(\geq3,\geq3,\geq1)$}& \tabitem $(\geq0,\geq2,\geq3)$ & \tabitem $(\geq0,\leq1,\geq0)$ & \tabitem $(\geq0,\geq2,=2)$ \\\\
 & &  & \tabitem $(\geq0,\geq0,\leq1)$ &  \\\\
 
\midrule

%--------------------------------------------------------------------------------
\\
Case 3.0.2 & \multirow{2}{*}{$(0,0,\leq-1)$}& \tabitem $(\geq4,\geq2,\geq4)$ & \tabitem $(\geq0,\geq0,\leq1)$ & \tabitem $(=2,\geq3,\geq3)$\\\\
\multirow{2}{*}{$dD_1+(e-bf)D_4+fD_6$} & &  &\tabitem $(\geq0,\leq1,\geq0)$ &\tabitem $(=3,\geq2,\geq2)$ \\\\
 & &  &\tabitem $(\leq1,\geq2,\geq2)$  & \tabitem $(\geq4,\geq2,\in\{2,3\})$ \\\\
 & &  &\tabitem $(=2,=2,\geq2)$  &  \\\\
 & &  &\tabitem $(=2,\geq2,=2)$  &  \\\\
\cmidrule(r){2-5}

 & \multirow{2}{*}{$(0,\geq1,\leq-1)$}& \tabitem $(\geq4,\geq2,\geq4)$ & \tabitem $(\geq0,\geq0,\leq1)$ & \tabitem $(\in\{2,3\},\geq2,\geq2)$\\\\
 & &  &\tabitem $(\geq0,\leq1,\geq0)$ &\tabitem $(\geq4,\geq2,\in\{2,3\})$ \\\\
 & &  &\tabitem $(\leq1,\geq2,\geq2)$  &  \\\\
\cmidrule(r){2-5}

 & \multirow{2}{*}{$(\geq1,=0,\leq-1)$}& \tabitem $(\geq4,\geq2,\geq4)$ & \tabitem $(\geq0,\geq0,\leq1)$ & \tabitem $(=2,\geq2,\geq3)$\\\\
 & &  &\tabitem $(\geq0,\leq1,\geq0)$ & \tabitem$(=3,\geq2,\geq2)$ \\\\
 & &  &\tabitem $(\leq1,\geq2,\geq2)$  &\tabitem $(\geq4,\geq2,=3)$  \\\\
 & &  &\tabitem $(=2,\geq2,=2)$  &  \\\\
%\cmidrule(r){2-5} 

 & \multirow{2}{*}{$(\geq1,=1,\leq-1)$}& \tabitem $(\geq4,\geq2,\geq4)$ & \tabitem $(\geq0,\geq0,\leq1)$ & \tabitem $(\leq3,\geq2,\geq3)$\\\\
 & &  &\tabitem $(\geq0,\leq1,\geq0)$ &\tabitem $(\geq4,\geq2,\in\{2,3\})$ \\\\
 & &  &\tabitem $(\leq1,\geq2,=2)$  &  \\\\
\cmidrule(r){2-5} 

 & \multirow{2}{*}{$(\geq1,=2,\leq-1)$}& \tabitem $(\geq4,\geq2,\geq4)$ & \tabitem $(\geq0,\geq0,\leq1)$ &\tabitem $(=0,\geq2,\geq3)$ \\\\
 & &  &\tabitem $(\geq0,\leq1,\geq0)$ &\tabitem $(\in\{1,2,3\},\geq2,\geq2)$ \\\\
 & &  &\tabitem $(=0,\geq2,=2)$  & \tabitem $(\geq4,\geq2,\in\{2,3\})$ \\\\
\cmidrule(r){2-5}

 & \multirow{2}{*}{$(\geq1,\geq3,\leq-1)$}& \tabitem $(\geq4,\geq2,\geq4)$ & \tabitem $(\geq0,\geq0,\leq1)$ &\tabitem $(\leq3,\geq2,\geq2)$ \\\\
 & &  &\tabitem $(\geq0,\leq1,\geq0)$ &\tabitem $(\geq4,\geq2,\in\{2,3\})$ \\\\

\bottomrule

%\end{adjustwidth}
%\end{figure}
\end{longtable}
}
\footnotetext[2]{These cases are previously studied.}
\footnotetext[3]{It also holds for any permutation of these numbers.}
\footnotetext[4]{The open cases are difficult to list in the given space.}

%-----------------------------------------------------------------------------------------------------------------------------------

%\newpage
{\tiny
\begin{longtable}{lllll}
\caption{Algebraic hyperbolicity for a very general surface $S$ of class $D=dD_{v_1}+fD_{u_1}+(e+f)D_{z_1}$ in Theorem \ref{first}, when $X_{\Sigma}$ is a smooth complete toric threefold with  Picard rank 3 and $\Sigma$ is not a splitting fan 
}

 \label{tab:long4} \\
\hline 
\\
\multicolumn{1}{c}{$X$} & \multicolumn{1}{c}{\textbf{Extra conditions}} & \multicolumn{1}{c}{\textbf{Hyperbolic if}} &
\multicolumn{1}{c}{\textbf{ Not Hyperbolic if}}&
\multicolumn{1}{c}{\textbf{ Open case if}}
\\\\ 

 &  & \multicolumn{1}{c}{$(d,e,f)$} &
\multicolumn{1}{c}{$(d,e,f)$}&
\multicolumn{1}{c}{$(d,e,f)$}
\\\\
\hline

\endfirsthead

\multicolumn{5}{c}%
{{\bfseries \tablename\ \thetable{} -- continued from previous page}} \\\\ \hline 
\\
\multicolumn{1}{c}{\textbf{X}} & \multicolumn{1}{c}{\textbf{Extra conditions}} & \multicolumn{1}{c}{\textbf{Hyperbolic if}} &
\multicolumn{1}{c}{\textbf{ Not Hyperbolic if}}&
\multicolumn{1}{c}{\textbf{ Open case if}}
\\\\
&  & \multicolumn{1}{c}{$(d,e,f)$} &
\multicolumn{1}{c}{$(d,e,f)$}&
\multicolumn{1}{c}{$(d,e,f)$}
\\\\ \hline 

\endhead

\hline \multicolumn{5}{|r|}{{Continued on next page}} \\ \hline
\endfoot

\hline \hline
\endlastfoot

\\
\multirow{6}{*}{Case 3.1.1}  & \multirow{3}{*}{$b_1=0$}  & \tabitem $(\geq4,\geq3,\geq2)$  & \tabitem $(\in \{1,2,3\},\geq0,\geq0)$   & \tabitem$(\geq4,=2,=2)$   \\\\
 & &\tabitem $(\geq4,=2,\geq3)$  & \tabitem $(\geq0,\geq0,\leq1)$   &\tabitem $(\geq4,=0,\in\{2,3,4\})$\\\\
 & &\tabitem$(\geq4,=0,\geq5)$  & \tabitem $(\geq0,=1,\geq0)$  & \tabitem $(=0,\geq2,\geq4)$\\\\
 
& &  & \tabitem $(=0,=0,\leq3)$ &\tabitem $(0,=0,\geq4)$\\\\
& &  & \tabitem $(=0,\geq2,\leq3)$ &\\\\

\cmidrule(r){2-5}

  & \multirow{3}{*}{$b_1=1$}  & \tabitem $(\geq4,\geq3,\geq2)$  & \tabitem $(\in \{1,2,3\},\geq0,\geq0)$   & \tabitem$(\geq4,=2,=2)$   \\\\
 & &\tabitem $(\geq4,=2,\geq3)$  & \tabitem $(\geq0,\geq0,\leq1)$   &\tabitem $(\geq4,=0,\in\{2,3,4\})$\\\\
 & &\tabitem$(\geq4,=0,\geq5)$  & \tabitem $(\geq0,=1,\geq0)$  & \tabitem $(=0,\geq2,\geq3)$\\\\
 
& &  & \tabitem $(=0,=0,\leq2)$ &\tabitem $(0,=0,\geq3)$\\\\
& &  & \tabitem $(=0,\geq2,\leq2)$ &\\\\
\cmidrule(r){2-5}

  & \multirow{3}{*}{$b_1\geq2$}  & \tabitem $(\geq4,\geq3,\geq2)$  & \tabitem $(\in \{1,2,3\},\geq0,\geq0)$   & \tabitem$(\geq4,=2,=2)$   \\\\
 & &\tabitem $(\geq4,=2,\geq3)$  & \tabitem $(\geq0,\geq0,\leq1)$   &\tabitem $(\geq4,=0,\in\{2,3,4\})$\\\\
 & &\tabitem$(\geq4,=0,\geq5)$  & \tabitem $(\geq0,=1,\geq0)$  & \tabitem $(=0,\geq2,\geq2)$\\\\
 
& &  &  &\tabitem $(0,=0,\geq2)$\\\\

\cmidrule(r){1-5}

\\\\\\\\

\multirow{6}{*}{Case 3.1.2}  & \multirow{3}{*}{$b_1=0$}  & \tabitem $(\geq4,\geq4,\geq1)$  & \tabitem $(\in \{1,2,3\},\geq0,\geq0)$ & \tabitem $(\geq4,\geq4,=0)$   \\\\
 & &  & \tabitem$(\geq0,\in\{1,2,3\},\geq0)$  & \tabitem $(0,\geq4,\geq2)$\\\\
  & &  & \tabitem $(0,\geq4,\leq1)$  & \tabitem $(\geq4,=0,\geq2)$ \\\\
 & &  & \tabitem  $(\geq4,0,\leq1)$ &\tabitem $(0,0,\geq4)$  \\\\
 & &  & \tabitem  $(0,0,\leq3)$ &  \\\\
\cmidrule(r){2-5}

& \multirow{3}{*}{$b_1=1$}  & \tabitem $(\geq4,\geq4,\geq1)$  & \tabitem $(\in\{1,2,3\},\geq0,\geq0)$   & \tabitem $(\geq4,\geq4,=0)$ \\\\
 & &  & \tabitem$(\geq0,\in\{1,2,3\},\geq0)$    & \tabitem$(0,\geq4,\geq0)$\\\\
 & &  &\tabitem  $(\geq4,=0,<=1)$  &\tabitem$(\geq4,=0,\geq2)$\\\\
 & &  &\tabitem  $(=0,=0,<=2)$  &\tabitem$(=0,=0,\geq3)$\\\\
\cmidrule(r){2-5}

& \multirow{3}{*}{$b_1\geq2$}  & \tabitem $(\geq4,\geq4,\geq1)$  & \tabitem $(\in\{1,2,3\},\geq0,\geq0)$   & \tabitem $(\geq4,\geq4,=0)$ \\\\
 & &  & \tabitem$(\geq0,\in\{1,2,3\},\geq0)$    & \tabitem$(0,\geq4,\geq0)$\\\\
 & &  &\tabitem  $(\geq4,=0,<=1)$  &\tabitem$(\geq4,=0,\geq2)$\\\\
 & &  &\tabitem  $(=0,=0,<=1)$  &\tabitem$(=0,=0,\geq2)$\\\\
\cmidrule(r){1-5}

\multirow{2}{*}{Case 3.1.3}& $c_2=0$  & \tabitem $(\geq2,\geq4,\geq2)$  & \tabitem $(\geq0,\in\{1,2,3\},\geq0)$   & $(\geq2,=0,\geq4)$   \\\\
 & &  &\tabitem  $(\geq0,\geq0,\leq1)$&\\\\
 & &  &\tabitem  $(\leq1,\geq4,\geq2)$&\\\\
 & &  &\tabitem  $(\geq0,=0,\leq3)$&\\\\
 & &  &\tabitem  $(\leq1,=0,\geq0)$&\\\\
\cmidrule(r){2-5}

& $b_1=0,c_2=1$  & \tabitem $(\geq1,\geq4,\geq2)$   & \tabitem $(\geq0,\in\{1,2,3\},\geq0)$   & \tabitem$(=0,\geq4,\geq2)$   \\\\
 & &  &\tabitem  $(\geq0,\geq0,\leq1)$& \tabitem$(\geq0,=0,\geq4)$\\\\
& &  &\tabitem  $(\geq0,=0,\leq3)$&\\\\
\cmidrule(r){2-5}

& \tabitem $b_1=0,c_2\geq2$  & \tabitem $(\geq0,\geq4,\geq2)$   & \tabitem $(\geq0,\in\{1,2,3\},\geq0)$   &\tabitem$(\geq0,=0,\geq4)$   \\\\
 &\tabitem $b_1\geq1,c_2\geq1$ &  &\tabitem  $(\geq0,\geq0,\leq1)$& \\\\
& &  &\tabitem  $(\geq0,=0,\leq3)$&\\\\
\midrule

\multirow{2}{*}{Case 3.1.4}  & \multirow{2}{*}{$b_1=b_2=0$}  & \tabitem$(\geq1,\geq2,\geq4)$   & \tabitem $(\geq0,\geq0,\in\{1,2,3\})$ & \tabitem $(=0,\geq2,\geq4)$  \\\\
 & &  &\tabitem$(\geq0,\leq1,\geq0)$  &\tabitem $(\geq2,\geq4,=0)$\\\\
  & &  &\tabitem $(\geq0,\leq3,=0)$  &\\\\
   & &  &\tabitem $(\leq1,\geq0,=0)$  &\\\\
\cmidrule(r){2-5}

 & \tabitem $b_1\geq1,b_2=0$  & \tabitem$(\geq0,\geq2,\geq4)$   & \tabitem $(\geq0,\geq0,\in\{1,2,3\})$ & \tabitem  $(\geq2,\geq4,=0)$  \\\\
 &  \tabitem $b_1=0,b_2\geq1$& &\tabitem$(\geq0,\leq1,\geq0)$  &\\\\
 & &  &\tabitem $(\geq0,\leq3,=0)$  &\\\\
 & &  &\tabitem $(\leq1,\geq0,=0)$  &\\\\ 
\cmidrule(r){2-5}

&  $b_1>=1,b_2>=1$  & \tabitem$(\geq0,\geq2,\geq4)$   & \tabitem $(\geq0,\geq0,\in\{1,2,3\})$ & \tabitem  $(\geq0,\geq4,=0)$  \\\\
 & & &\tabitem$(\geq0,\leq1,\geq0)$  &\\\\\\
 & &  &\tabitem $(\geq0,\leq3,=0)$  &\\\\
  
\midrule

\multirow{2}{*}{Case 3.1.5}  & \multirow{2}{*}{-}  & \tabitem $(\geq2,\geq0,\geq5)$  & \tabitem $(\geq0,\geq0,\in\{1,2,3\})$  & \tabitem $(\geq2,=0,=4)$   \\\\
&   &\tabitem $(\geq2,\geq1,=4)$  & \tabitem $(\in\{0,1\},\geq0,\geq0)$  & \tabitem $(\geq4,\geq2,=0)$  \\\\
&   &  & \tabitem $(\in\{2,3\},\geq0,=0)$  &   \\\\
&   &  & \tabitem $(\geq0,\in\{0,1\},=0)$  &   \\\\

\bottomrule
\end{longtable}
}

%-----------------------------------------------------------------
\section{Proof of main results}
First we analyze the genus 0 or 1 curves in the boundary using Lemma \ref{boundarycurves}. For Lemma \ref{lemma7.1} and Lemma \ref{lemma7.2}, we will only give details for Case 2.0.1. The argument for the other cases is similar to Case 2.0.1.

\begin{lemma}\label{lemma7.1}
Let $X_\Sigma$ be a smooth complete toric threefold with Picard rank 2 or 3 and let $S$ be a very general surface in $X_{\Sigma}$. In Tables \ref{tab:long2},\ref{tab:long3},\ref{tab:long4}  the cases in the column not algebraically hyperbolic are the necessary conditions for the the curves in boundary has geometric genus at most 1.
\end{lemma}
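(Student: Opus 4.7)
The plan is to apply Lemma \ref{boundarycurves} systematically. For each of the nine cases enumerated before the statement, and each basepoint free divisor class $D$ allowed by Proposition \ref{proposition6.2}, I would write down the polytope $P(D) \subset M_{\mathbb{R}} \cong \mathbb{R}^3$ explicitly from the ray data of Section 3, using a convenient torus invariant representative of $D$ built from the matrix $A$ in Table \ref{Markov}. For each ray $\rho \in \Sigma(1)$ the face $F_\rho < P(D)$ on which $\langle \cdot, u_\rho\rangle$ attains its minimum is read off directly from the defining inequalities of $P(D)$; the number of interior lattice points of $F_\rho$, viewed in its two-dimensional ambient lattice, then equals the geometric genus of the boundary curve $C_\rho = S \cap D_\rho$, by Lemma \ref{boundarycurves}. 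Taking the disjunction over the rays of the conditions ``$F_\rho$ has at most one interior lattice point'' (including the degenerate situation $\dim F_\rho \le 1$ flagged by the remark following Lemma \ref{boundarycurves}) yields the parameter regions in which at least one boundary curve has geometric genus $\le 1$. I claim these are exactly the entries recorded in the ``not hyperbolic'' columns of Tables \ref{tab:long2}, \ref{tab:long3}, and \ref{tab:long4}.

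I would present Case 2.0.1 in detail as a template. Writing $D = aD_2 + bD_3$ with $a, b \ge 0$ and using this very representative, $P(D)$ is the polytope $\{m \in \mathbb{R}^3 : 0 \le m_1 \le a,\ m_2 \ge -b,\ m_3 \ge 0,\ m_2 + m_3 \le l m_1\}$. For each of the five rays $u_\rho$ the minimum face is either a triangle or a trapezoid whose interior lattice point count is a simple polynomial in $a$, $b$, $l$; equating these counts to $0$ or $1$ reproduces the subdivision in Table \ref{tab:long2} into $l = 0$, $l = 1$, $l = 2$, $l = 3$, and $l \ge 4$, which is forced by the combinatorial changes of the face $F_{\rho_5}$ (where $u_5 = l e_1 - e_2 - e_3$ attains its minimum) as $l$ increases. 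The remaining eight cases are carried out by the same recipe: the nef cone descriptions in Table \ref{Pic} restrict $D$ to a three-parameter family, the coefficients $a_\rho$ of any chosen torus invariant representative are affine functions of those parameters, and each minimum face $F_\rho$ is a lattice polygon whose interior lattice point count is computed by elementary means (Pick's formula or direct enumeration).

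The main obstacle is bookkeeping rather than mathematical depth. Each of the nine cases introduces its own parameters ($l$, $l_1$, $l_2$ in the Picard rank $2$ cases; $r, a, b$ in Case 3.0; $b_1, b_2, c_2$ in Case 3.1), and the subcases that split on the sign or the relative size of these parameters (for instance $b \ge 0$ vs.\ $b < 0$ in Cases 3.0.1 and 3.0.2, or $b_1$ vs.\ $c_2$ in Case 3.1.3) correspond to combinatorial changes of $P(D)$: a face $F_\rho$ may drop dimension, collapse onto another face, or swap its two-dimensional shape. A careful enumeration ensures no parameter region is missed. Completeness, namely that outside the tabulated ranges every two-dimensional $F_\rho$ carries at least two interior lattice points, is a finite verification per case that uses only the explicit inequalities defining $P(D)$ and a lower bound on the width of $F_\rho$ derived from the nef cone generators listed in Table \ref{Pic}.
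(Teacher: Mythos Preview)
Your proposal is correct and follows essentially the same approach as the paper: both compute the polytope $P(D)$ explicitly from the ray data, enumerate the facets $F_\rho$ on which each $\langle\,\cdot\,,u_\rho\rangle$ is minimized, count interior lattice points via Lemma~\ref{boundarycurves}, and read off the parameter ranges where some count is at most one. The paper likewise works out only Case~2.0.1 in detail and declares the remaining cases analogous; one small imprecision in your write-up is that the subdivision by $l$ in Table~\ref{tab:long2} is not driven by combinatorial changes of $F_{\rho_5}$ alone but by the way the interior lattice point counts of several facets (notably $F_{\rho_2}$ and $F_{\rho_3},F_{\rho_4},F_{\rho_5}$, which share the formula $(a-1)(b-1)+la(a-1)/2$) cross the threshold $\le 1$ as $l$ varies.
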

\begin{proof}
We prove the case 2.0.1. Analysis of the remaining cases are similar. Since $a\geq1$, $D$ is big. 
We will analyze each of the facets of $P(D)$. See Figure \ref{fig:Facetscase1}. 
\begin{enumerate}
    \item \textbf{Facet 1.}
    The interior is given by the equations $x=0$, $y>-b$, $z>0$ and $lx-y-z>0$. It is easy to find the integer values $x,y,z$ that satisfy the above equations. Indeed there are $(b-1)(b-2)/2$ solutions if $b\geq1$. If $b=0$, the restriction is the trivial divisor, and there is no curve in the intersection.
    \item \textbf{Facet 2.}
    The interior is given by the equations $x=a$, $y>-b$, $z>0$ and $la-y-z>0$. Thus, we have $(la+b-2)(la+b-1)/2$ interior lattice points.
    \item \textbf{Facet 3.}
    The interior is given by the equations $y=-b$, $0<x<a$, $z>0$ and $lx+b-z>0$. Thus, we have $(a-1)(b-1)+la(a-1)/2$ interior lattice points.
    \item \textbf{Facet 4.}
    The interior is given by the equations $z=0$, $0<x<a$, $y>-b$ and $lx>y$. Thus, we have $(a-1)(b-1)+la(a-1)/2$ interior lattice points.
    \item \textbf{Facet 5.}
    The interior is given by the equations $lx-y-z=0$, $0<x<a$, $y>-b$ and $z>0$. Thus, we have $(a-1)(b-1)+la(a-1)/2$ interior lattice points.
\end{enumerate}
Thus by Lemma \ref{boundarycurves}, we have the results.
\end{proof}

%--------------------------------------------------------------------

\begin{figure}[ht]
    \begin{subfigure}[b]{0.32\textwidth}
        \centering
        \resizebox{\linewidth}{!}{
            \begin{tikzpicture}
            \draw[blue, very thick] (0,0) -- (3,0);
            \draw[blue, very thick] (0,0) -- (0,3);
            \draw[blue, very thick] (3,0) -- (0,3);
            \filldraw[red] (0,0) circle (2pt) node[anchor=north] {$(0,-b,0)$};
            \filldraw[red] (3,0) circle (2pt) node[anchor=north] {$(0,0,0)$};
            \filldraw[red] (0,3) circle (2pt) node[anchor=south] {$(0,-b,b)$}; 
        \end{tikzpicture}
        }
        \caption{Facet-1}
         \label{fig:Facet1}
    \end{subfigure}
    \begin{subfigure}[b]{0.32\textwidth}
    \centering
        \resizebox{\linewidth}{!}{\begin{tikzpicture}
            \draw[blue, very thick] (0,0) -- (3,0);
            \draw[blue, very thick] (0,0) -- (0,3);
            \draw[blue, very thick] (3,0) -- (0,3);
            \filldraw[red] (0,0) circle (2pt) node[anchor=north] {$(a,-b,0)$};
            \filldraw[red] (3,0) circle (2pt) node[anchor=north] {$(a,la,0)$};
            \filldraw[red] (0,3) circle (2pt) node[anchor=south] {$(a,-b,la+b)$}; 
        \end{tikzpicture}
            
        }
       \caption{Facet-2}
         \label{fig:Facet2}
    \end{subfigure}
    \begin{subfigure}[b]{0.32\textwidth}
        \centering
        \resizebox{\linewidth}{!}{
            \begin{tikzpicture}
            \draw[blue, very thick] (0,0) -- (2,0);
            \draw[blue, very thick] (0,0) -- (0,2);
            \draw[blue, very thick] (2,0) -- (2,4);
            \draw[blue, very thick] (0,2) -- (2,4);
            \filldraw[red] (0,0) circle (2pt) node[anchor=north] {$(0,-b,0)$};
            \filldraw[red] (2,0) circle (2pt) node[anchor=north] {$(a,-b,0)$};
            \filldraw[red] (0,2) circle (2pt) node[anchor=east] {$(0,-b,b)$};
            \filldraw[red] (2,4) circle (2pt) node[anchor=south] {$(a,-b,la+b)$};
        \end{tikzpicture}

        }
        \caption{Facet-3}
        \label{fig:Facet3}
    \end{subfigure}
    \\
    \begin{subfigure}[b]{0.45\textwidth}
        \centering
        \resizebox{\linewidth}{!}{
            \begin{tikzpicture}
            \draw[blue, very thick] (0,0) -- (2,0);
            \draw[blue, very thick] (0,0) -- (0,2);
            \draw[blue, very thick] (2,0) -- (4,2);
            \draw[blue, very thick] (0,2) -- (4,2);
            \filldraw[red] (0,0) circle (2pt) node[anchor=north] {$(0,-b,0)$};
            \filldraw[red] (2,0) circle (2pt) node[anchor=north] {$(0,0,0)$};
            \filldraw[red] (0,2) circle (2pt) node[anchor=east] {$(a,-b,0)$};
            \filldraw[red] (4,2) circle (2pt) node[anchor=south] {$(a,la,0)$};
        \end{tikzpicture}

        }
        \caption{Facet-4}
        \label{fig:Facet4}
    \end{subfigure}
    \begin{subfigure}[b]{0.45\textwidth}
        \centering
        \resizebox{\linewidth}{!}{
            \begin{tikzpicture}
            \draw[blue, very thick] (0,0) -- (2,0);
            \draw[blue, very thick] (0,0) -- (0,2);
            \draw[blue, very thick] (2,0) -- (2,4);
            \draw[blue, very thick] (0,2) -- (2,4);
            \filldraw[red] (0,0) circle (2pt) node[anchor=north] {$(0,-b,b)$};
            \filldraw[red] (2,0) circle (2pt) node[anchor=west] {$(a,-b,la+b)$};
            \filldraw[red] (0,2) circle (2pt) node[anchor=east] {$(0,0,0)$};
            \filldraw[red] (2,4) circle (2pt) node[anchor=south] {$(a,la,0)$};
        \end{tikzpicture}

        }
        \caption{Facet-5}
        \label{fig:Facet5}
    \end{subfigure}

\caption{Facets of Polytope $P(aD_2+bD_3)$ when $a,b\geq1$ in Lemma \ref{lemma7.1}}
\label{fig:Facetscase1}
\end{figure}
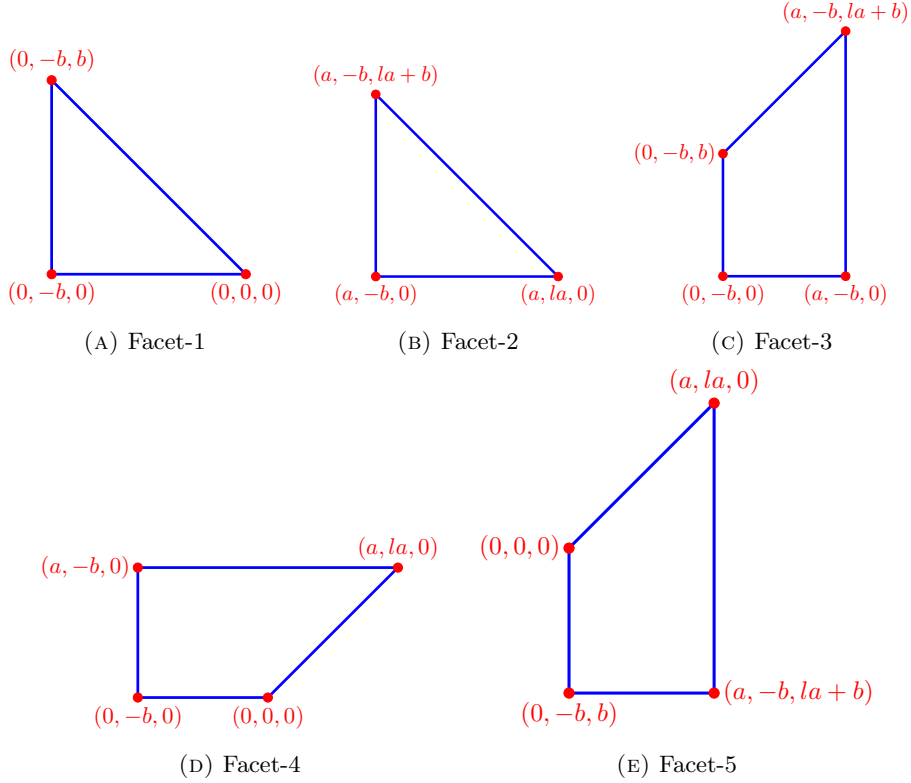

\begin{lemma}\label{lemma7.2}
Let $X_\Sigma$ be a smooth complete toric threefold with Picard rank 2 or 3. In Table \ref{tab:conn}, we list the divisors $E,E'$ in each case such that the configuration $(E+E',E)$ has connected sections.
\end{lemma}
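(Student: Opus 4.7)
The plan is to verify the hypotheses of Proposition \ref{connectedsection} case by case. For each entry of Table \ref{tab:conn}, the divisors $E$ and $E'$ will be chosen inside the nef cone described in Proposition \ref{proposition6.2}; nefness is easily verified via Theorem \ref{theoremconevex} (one only checks that $\varphi_{E}$ and $\varphi_{E'}$ satisfy the required inequality on each primitive collection). Since $X_\Sigma$ is a smooth projective toric threefold of Picard rank at most $3$, Theorem \ref{IDPtheorem} then gives automatically that the pair $(E,E')$ has the integer decomposition property, so the only nontrivial step is to show that the difference set
\[
\mathcal{G} := \bigl(i(P(E')) \cap \mathbb{Z}^r\bigr) - \bigl(i(P(E')) \cap \mathbb{Z}^r\bigr)
\]
is a Markov basis for the toric ideal $I_B$.

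The key observation is that Proposition \ref{prop6.1} already exhibits an explicit Markov basis in the last column of Table \ref{Markov}; in every case its three columns are precisely the images under $i$ of the standard basis vectors $e_1^*, e_2^*, e_3^*$ of $M$ (as can be read off directly from the matrix $A$). Since $i$ is injective, exhibiting these three vectors inside $\mathcal{G}$ reduces to producing, for each $j \in \{1,2,3\}$, lattice points $m_1, m_2 \in P(E') \cap M$ with $m_1 - m_2 = e_j^*$. Once $\mathcal{G}$ contains this generating set, it automatically generates $I_B$ and is itself a Markov basis, and Proposition \ref{connectedsection} then yields that $(E+E',E)$ has connected sections.

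To illustrate concretely in Case 2.0.1, I would take $E' = D_2 + D_3$, which is nef by Proposition \ref{proposition6.2}. The defining inequalities of $P(D_2+D_3)$ read $0 \leq x \leq 1$, $y \geq -1$, $z \geq 0$, $y+z \leq lx$, and one checks that this polytope contains $(0,0,0), (1,0,0), (0,-1,0), (0,-1,1)$. The differences of these four points already realize $\pm e_1^*, \pm e_2^*, \pm e_3^*$, so under $i$ they recover the three Markov basis columns of Table \ref{Markov} for this case. For each of the remaining cases the recipe is identical: choose a small nef combination of the generators of $\mathrm{Nef}(X_\Sigma)$ listed in Proposition \ref{proposition6.2}, write down $P(E')$ from the inequalities $\langle m, u_\rho \rangle \geq -a_\rho$, and exhibit four well-chosen lattice points whose pairwise differences cover $\pm e_1^*, \pm e_2^*, \pm e_3^*$.

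The main obstacle I anticipate is the book-keeping in degenerate parameter regimes. When some of the parameters $l, l_1, l_2, r, a, b, b_1, b_2, c_2$ vanish (or are extremal), the polytope $P(E')$ may degenerate to a lower-dimensional object, and the naive choice of $E'$ can fail to contain a lattice point realizing one of the three basis vectors as a difference. In these subcases one must enlarge $E'$ by adding another generator of the nef cone, or swap the roles of several generators, which is precisely the reason Table \ref{tab:conn} will need to list different pairs $(E,E')$ depending on the defining parameters of $\Sigma$. Once the correct pair is identified in every parameter regime, the verification in each individual case reduces to an elementary (if somewhat tedious) lattice-point computation.
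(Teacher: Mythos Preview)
Your overall framework matches the paper's: both reduce the claim to Proposition~\ref{connectedsection}, invoke Theorem~\ref{IDPtheorem} for the IDP hypothesis, and then aim to show that the difference set $\mathcal{G}$ contains the Markov basis supplied by Proposition~\ref{prop6.1}. However, your ``key observation'' that the Markov basis columns in Table~\ref{Markov} are always the images $i(e_1^*), i(e_2^*), i(e_3^*)$ is incorrect. It happens to hold in Case 2.0.1 and a few others, but in Case 2.0.2 the first Markov column is $i(e_1^*-e_2^*)$, in Cases 3.0.1/3.0.2 the third column is $i(e_3^*+b\,e_2^*)$, and Cases 3.1.3 and 3.1.4 likewise involve non-standard combinations. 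Worse, $\{i(e_1^*),i(e_2^*),i(e_3^*)\}$ need not itself be a Markov basis: in Case 2.0.2 with $l_1\geq 1$ the corresponding binomials $x_1x_5^{l_1}-x_3,\ x_2x_5^{l_2}-x_3,\ x_4-x_5$ cut out the quotient $\mathbb{C}[x_1,x_2,x_5]/\langle x_5^{l_1}(x_2x_5^{l_2-l_1}-x_1)\rangle$, which is not a domain, so they generate an ideal strictly smaller than the prime ideal $I_B$.

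Consequently your proposed recipe---find lattice points in $P(E')$ whose pairwise differences realize $\pm e_j^*$---does not suffice. The fix is exactly what the paper does: for each case one must exhibit lattice points of $P(E')$ whose differences realize the \emph{specific} Markov vectors listed in Table~\ref{Markov} (or some equivalent generating set for $I_B$), and these are not in general the standard basis directions. With that correction your argument goes through. As a minor aside, your illustrative choice $E'=D_2+D_3$ in Case 2.0.1 is the table entry only for $l=0$; for $l\geq 1$ Table~\ref{tab:conn} takes $E'=D_2$, and the paper's own computation works with that smaller polytope.
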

\begin{proof}

Let 
\begin{equation*}
\begin{split}
A=
\begin{pmatrix}
1&0&0\\
-1&0&0\\
0&1&0\\
0&0&1\\
l&-1&-1
\end{pmatrix},
\end{split}
\quad
\begin{split}
    t_1=
    \begin{pmatrix}
    0\\
    -1\\
    0\\
    0\\
    0
    \end{pmatrix},
\end{split}
\quad
\begin{split}
    t_2=
    \begin{pmatrix}
    0\\
    -a\\
    -b\\
    0\\
    0
    \end{pmatrix},
\end{split}
\quad
\begin{split}
    t_3=
    \begin{pmatrix}
    0\\
    -a-1\\
    -b\\
    0\\
    0
    \end{pmatrix}.
\end{split}    
\end{equation*}
Then $P(E')=P(A,t_1)$, $P(D)=P(A,t_2)$ and $P(E)=P(A,t_3)$. Clearly the polytope $P(E')$ contains the points $(0,0,0),(1,0,0),(1,l-1,1),(1,l,0),(1,l-1,0)\in \mathbb{Z}^3$.
%and their images under the map $i$ are $(0,0,0,0,0),(1,-1,0,0,l),(1,-1,l-1,1,0),(1,-1,l,0,0),(1,-1,l-1,0,1)\in \mathbb{Z}^5$ respectively. 
Let $G=i(P(E')-i(P(E'))$. Then $G$ contains the vectors 
\begin{align*}
    i(1,0,0)-i(0,0,0))&=(1,-1,0,0,l)-(0,0,0,0,0)=(1,-1,0,0,l)\\
     i(1,l,0)-i(1,l-1,1))&=(1,-1,l,0,0)-(1,-1,l-1,1,0)=(0,0,1,-1,0)\\
    i(1,l-1,1)-i(1,l-1,0)&=(1,-1,l-1,1,0)-(1,-1,l-1,0,1)=(0,0,0,1,-1).
\end{align*}
Thus by Proposition \ref{prop6.1}, $G$  is a Markov basis for $I_B$. Note that $P(D)\cap \mathbb{Z}^3=P(E)\cap\mathbb{Z}^3+P(E')\cap\mathbb{Z}^3$ by Theorem \ref{IDPtheorem}. Hence by Proposition \ref{connectedsection}, configuration $(D,E)$ has connected sections.
\end{proof}

%--------------------------------------------------------------------

%{
%\tiny
%\begin{longtable}{llll}
%\caption{$(E+E',E)$ has connected sections}
%\label{tab:conn} \\
\begin{table}
\tiny
\caption{ The divisors $E,E'$ such that the configuration $(E+E',E)$ has connected sections in Lemma \ref{lemma7.2}}
\label{tab:conn}
\begin{tabular}{llll}
\toprule
\multicolumn{1}{c}{$X$}&
\multicolumn{1}{c}{$E$} & \multicolumn{1}{c}{\text{Extra conditions}} & \multicolumn{1}{c}{$E'$} \\\\
\hline
\\

\text{Case 2.1.1} & $aD_2+bD_3$ & $l=0$  & $D_2+D_3$     \\\\ 
\cmidrule(r){3-4}
&   & $l\geq1$ &  $D_2$  \\\\ 
\midrule
\text{Case 2.1.2} & $aD_3+bD_4$ & $l_1=l_2=0$  & $D_3+D_4$     \\\\ 
\cmidrule(r){3-4}
&   & $l_2\geq1$ &  $D_4$  \\\\ 
\midrule

\multirow{3}{*}{Case 3.0.1} & \multirow{3}{*}{$dD_1+eD_4+fD_6$}& $(r,a,b)=(0,0,0)$  & $D_1+D_4+D_6$ \\\\ 
\cmidrule(r){3-4}
&   & $b=0,r+a\geq1$ &  $D_4+D_6$\\\\ 
\cmidrule(r){3-4}
&   & $b\geq1,r+a=0$ &  $D_1+D_6$\\\\ 
\cmidrule(r){3-4}
&   & $b\geq1,r+a\geq1$ &  $D_6$\\\\
\midrule

\multirow{3}{*}{Case 3.0.2} & \multirow{3}{*}{$dD_1+eD_4+f(D_6-bD_4)$}& $r+a=0$  & $D_1+D_6-bD_4$ \\\\ 
\cmidrule(r){3-4}
&   & $r+a\geq1$ &  $D_6-bD_4$\\\\
\midrule

\multirow{3}{*}{Case 3.1.1} &\multirow{3}{*}{$dD_{v_1}+fD_{u_1}+(e+f)D_{z_1}$}& $b_1=0$  & \tabitem $D_{u_1}+D_{z_1}$     \\\\ 
& &   & \tabitem $D_{v_1}+D_{z_1}$ \\\\
\cmidrule(r){3-4}
&   & $b_1>1$ &  $D_{z_1}$  \\\\ 
\midrule

\multirow{3}{*}{Case 3.1.2} & \multirow{3}{*}{$dD_{v_1}+fD_{u_1}+(e+f)D_{z_1}$}& \multirow{2}{*}{$b_1=0$}  & \tabitem $D_{u_1}+D_{z_1}$    \\\\ 
 & &   & \tabitem $D_{v_1}+D_{z_1}$ \\\\  
\cmidrule(r){3-4}
&   & $b_1>1$ &   $D_{z_1}$  \\\\ 
\midrule

\multirow{3}{*}{Case 3.1.3} & \multirow{3}{*}{$dD_{v_1}+fD_{u_1}+(e+f)D_{z_1}$}&\multirow{2}{*} {$b_1=c_2=0$}  & \tabitem $D_{u_1}+D_{z_1}$   \\\\ 
 & &   & \tabitem$D_{v_1}+D_{z_1}$ \\\\  
\cmidrule(r){3-4}
&   & \text{else} &  $D_{z_1}$  \\\\ 
\midrule

\multirow{3}{*}{Case 3.1.4} & \multirow{3}{*}{$dD_{v_1}+fD_{u_1}+(e+f)D_{z_1}$}& \multirow{2}{*}{$b_1=b_2=0$}  & \tabitem $D_{u_1}+D_{z_1}$   \\\\ 
 & &   & \tabitem $D_{v_1}+D_{z_1}$ \\\\  
\cmidrule(r){3-4}
&   & else &  $D_{z_1}$  \\\\ 
\midrule

\multirow{3}{*}{Case 3.1.5} & \multirow{3}{*}{$dD_{v_1}+fD_{u_1}+(e+f)D_{z_1}$}& \multirow{2}{*}{$b_1=0$}  & \tabitem $D_{u_1}+D_{z_1}$    \\\\ 
 & &   & \tabitem $D_{v_1}+D_{z_1}$ \\\\  
\cmidrule(r){3-4}
&   & $b_1>1$ &  $D_{z_1}$  \\\\

\bottomrule
\end{tabular}

\end{table}
%\end{longtable}
%}

%----------------------------------------------------------------

\begin{corollary}
Let $X_\Sigma$ be a smooth complete toric threefold with Picard rank 2 or 3. Suppose $E$ and $E'$ are nef divisors on $X_\Sigma$ with $E'$ is big. Then $(E+E',E)$ has connected sections.
\end{corollary}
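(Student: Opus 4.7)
The plan is to verify the two hypotheses of Proposition \ref{connectedsection}. Since $X_\Sigma$ has Picard rank at most $3$, Theorem \ref{IDPtheorem} immediately provides that $(E,E')$ is an IDP pair. The real work is to show that
\[\mathcal{G}(E'):= \bigl(i(P(E'))\cap \mathbb{Z}^r\bigr)-\bigl(i(P(E'))\cap \mathbb{Z}^r\bigr)\]
is a Markov basis for $I_B$.

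The central observation is a monotonicity principle: if $E_0'$ is a nef divisor with $E'-E_0'$ nef and $\mathcal{G}(E_0')$ a Markov basis, then $\mathcal{G}(E')$ is also a Markov basis. Indeed, setting $F:=E'-E_0'$, we have the Minkowski decomposition $P(E')=P(E_0')+P(F)$. Since $F$ is nef, hence basepoint free, $P(F)$ contains at least one lattice point $f$, so $P(E_0')+f \subseteq P(E')$. Intersecting with $\mathbb{Z}^3$ and using the translation invariance of taking differences gives $\mathcal{G}(E_0')\subseteq \mathcal{G}(E')$. Since every element of $\mathcal{G}(E')$ lies in $\ker(B)$ and $\mathcal{G}(E')$ contains a Markov basis, it is itself a Markov basis.

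With this reduction in hand, I would run a case-by-case check over the nine classification types (Cases 2.0.1 through 3.1.5). For each case I would pick $E_0'$ exactly as in Table \ref{tab:conn}; Lemma \ref{lemma7.2} already shows that $\mathcal{G}(E_0')$ is a Markov basis (that is the content of its proof). It then remains to verify that for an arbitrary big nef $E'$, one has $E'-E_0'\in \Nef(X_\Sigma)$ for the appropriate chamber. This amounts to writing $E'$ in the basis of $\Pic(X_\Sigma)$ given in Proposition \ref{proposition6.2}, comparing with the primitive generators of $\Nef(X_\Sigma)$, and observing that the inequalities defining bigness are precisely those required to subtract the chosen $E_0'$ while remaining in the nef cone.

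The main obstacle is this last chamber-by-chamber bookkeeping, most delicately for Case 3.0.2 (where the nef cone depends on the sign of $b$) and Cases 3.1.3 and 3.1.4 (whose nef and effective cones depend on the signs of $c_2-b_1$ and $b_2-b_1$). In each such situation one has to choose the right $E_0'$ from the corresponding row of Table \ref{tab:conn} so that the subtraction $E'-E_0'$ stays non-negative on every primitive generator of $\Nef(X_\Sigma)$; once the chamber is fixed the verification is just a short linear inequality check.
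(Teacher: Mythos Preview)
Your overall strategy---verify IDP via Theorem \ref{IDPtheorem}, then use a monotonicity principle to reduce the Markov-basis check for $E'$ to one of the small divisors $E_0'$ listed in Table \ref{tab:conn}---is sound and is almost certainly what the paper has in mind (the corollary is stated without proof, immediately after Lemma \ref{lemma7.2}). The monotonicity argument is correct as you wrote it.

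The gap is in the final step: your assertion that ``the inequalities defining bigness are precisely those required to subtract the chosen $E_0'$ while remaining in the nef cone'' fails in several cases. In Case~2.0.2 with $l_2\ge 1$, the divisor $E'=aD_3$ (any $a\ge 1$) is big and nef, yet Table \ref{tab:conn} offers only $E_0'=D_4$, and $E'-D_4=aD_3-D_4$ is not nef. Likewise in Case~3.1.1 with $b_1\ge 1$, the divisor $E'=D_{v_1}+D_{u_1}+D_{z_1}$ is big and nef, but the only tabulated choice $E_0'=D_{z_1}$ gives $E'-D_{z_1}=D_{v_1}+D_{u_1}$, which lies outside $\Nef(X_\Sigma)=\langle D_{v_1},D_{z_1},D_{u_1}+D_{z_1}\rangle$. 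In both examples one can check directly that $P(E')$ contains lattice points whose pairwise differences realise all three Markov generators from Table \ref{Markov}, so the corollary itself survives; but your reduction to Table \ref{tab:conn} does not. To repair the argument you must either enlarge the list of admissible $E_0'$ beyond what Lemma \ref{lemma7.2} records (for instance, verify separately that $\mathcal{G}(D_3)$ is a Markov basis in Case~2.0.2 when $l_2\ge 1$, and that $\mathcal{G}(D_{u_1}+D_{z_1})$ remains one in Case~3.1.1 for all $b_1\ge 1$), or argue directly that any full-dimensional $P(E')$ on these varieties contains lattice points whose differences hit the three Markov generators.
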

%\section{Proof of main theorem}
\begin{proof}[Proof of Theorem \ref{first}]
We will consider each case separately.\\

%-----------------------------------------------------------------------------------------------------------
\textbf{Case 2.0.1.~~} 
The case $l=0$ is discussed in \cite[Example 6.1]{1903.02681} and  \cite[Theorem 1.1]{MR3949983}. Hence, we can assume $l\geq1$.
Let $D=aD_2+bD_3$ with $a\geq1$ and $b\geq0$ and $E=(a-1)D_2+bD_3$. Then $D$ is big, and the configuration $(D,E)$ has connected sections by Lemma \ref{lemma7.2}. Applying Theorem \ref{Theorem5.14}, for any curve $C$ not contained in the toric boundary on a very general surface $S$ in $|D|$ we have, 
\begin{align}\label{case21eq}
    2g-2 \geq C\cdot ((a-3)D_2+(b+l-3)D_3).
\end{align}
By Theorem \ref{Theorem5.17}, if $a\geq2$ and $b\geq 3-l$, then the natural restriction map Pic$(X_{\Sigma}) \rightarrow$ Pic$(S)$ is an isomorphism.
Thus, any curve $C$ is rationally equivalent to the complete intersection of $S$ with a divisor in the class $cD_1+dD_3$. If $C$ is not contained in the boundary, an intersection number calculation yields
\begin{equation*}
     2g-2\geq
    cb(b+l-3)+d(la(a-3)+a(b+l-3)+(a-3)b).
\end{equation*}
The degree of such a curve $C$ with respect to ample class $H=D_2+D_3$ is given by
\begin{equation*}
    \text{deg}(C)= cb+d(a+b+al).
\end{equation*}
Let $\epsilon_0=\dfrac{1}{a+b+al}$. Then we have
\begin{equation*}
    2g-2\geq\epsilon_0\cdot \text{deg}(C).
\end{equation*}
By combining Lemma \ref{lemma7.1} and Corollary \ref{corollary111}, we have the results.\\\\

%-------------------------------------------------------------------------------------------------------------

\textbf{Case 2.0.2.~~} 
The case $l_1=l_2=0$ is discussed in \cite[Example 6.1]{1903.02681}. Hence, we can assume $l_2\geq1$
Let $D=aD_3+bD_4$ with $a\geq1$ and $b\geq0$ and $E=(a-1)D_3+bD_4$. Then $D$ is big, and the configuration $(D,E)$ has connected sections by Lemma \ref{lemma7.2}. Applying Theorem \ref{Theorem5.14}, for any curve $C$ not contained in the toric boundary on a very general surface $S$ in $|D|$ we have, 
\begin{align}\label{pic22eq}
    2g-2 \geq C\cdot((a-4)D_3+(b+l_1+l_2-2)D_4.
\end{align}
By Theorem \ref{Theorem5.17} , if $a\geq3$ and $b\geq 2-l_1-l_2$, then the natural restriction map Pic$(X_{\Sigma}) \rightarrow$ Pic$(S)$ is an isomorphism.
Thus, any curve $C$ is rationally equivalent to the complete intersection of $S$ with a divisor in the class $cD_2+dD_4$. If $C$ is not contained in the
boundary, an intersection number calculation yields
\begin{equation*}
     2g-2\geq c(l_1a(a-4)+a(b+l_1+l_2-2)+b(a-4))+da(a-4).
\end{equation*}
The degree of such a curve $C$ with respect to ample class $H=D_3+D_4$ is given by
\begin{equation*}
    \text{deg}(C)=  c(l_1a+b+a)+da.
\end{equation*}
Let $\epsilon_0=\dfrac{1}{l_1a+b+a}$. Then we have
\begin{equation*}
    2g-2\geq\epsilon_0\cdot \text{deg}(C).
\end{equation*}
By combining Lemma \ref{lemma7.1} and Corollary \ref{corollary111}, we have the results.\\\\

%--------------------------------------------------------------------------------------------------------------------------------------------------

\textbf{Case 3.0.1.}
    Let $r\geq3,a\geq3,b\geq1$. Then choose $D=dD_1+eD_4+fD_6$ and $E=dD_1+eD_3+(f-1)D_6$ with $d\geq0,e\geq0,f\geq1$. Then $D$ is big, and the configuration $(D,E)$ has connected sections by Lemma \ref{lemma7.2}. For other values of $r,a,b$, we may have to modify the choice of $(D,E)$ as in Lemma \ref{lemma7.2}. But the remaining process is similar.
    Applying Theorem \ref{Theorem5.14}, for any curve $C$ not contained in the toric boundary on a very general surface $S$ in $|D|$ we have,
\begin{align}\label{eq555}
      2g-2 \geq C \cdot ((d+a+r-2)D_1+(e+b-2)D_4+(f-3)D_6).
\end{align}
By Theorem \ref{Theorem5.17}, if $d\geq0$, $e\geq1$ and $f\geq 2$, then the natural restriction map Pic$(X_{\Sigma}) \rightarrow$ Pic$(S)$ is an isomorphism. Thus, any curve $C$ is rationally equivalent to the complete intersection of $S$ with a divisor in the class $jD_1+kD_3+lD_5$. If $C$ is not contained in the
boundary, an intersection number calculation yields

\begin{align*}
    2g-2\geq
    &j((e+b-2)f+(f-3)(e+bf)) +\\	&k((d+r+a-2)f+(d+af)(f-3)) +\\	&l((d+r+a-2)e+(e+b-2)(d+er)).
\end{align*}

The degree of such a curve $C$ with respect to ample class $H=D_1+D_4+D_6$ is given by
\begin{equation*}
    \text{deg}(C)=  j(f+e+bf)+k(f+d+af)+l(e+d+er).
\end{equation*}
%\begin{equation*}
%    \text{deg}(C)= j(bf+f+e)+k(f+d+af)+l(e+d+re).
%\end{equation*}
If $d\geq0,e\geq2,f\geq3$, then choose 
\[\epsilon_0=\dfrac{1}{f+d+e+bf+af+er)}.\]
Then we have
\begin{align*}
    2g-2\geq
    jf+4kf+l(4e+6)\geq \epsilon_0.~\text{deg}~C.
\end{align*}
By combining Lemma \ref{lemma7.1} and Corollary \ref{corollary111}, we have the results. \\\\
 
\textbf{Case 3.0.2.}
    Let 
     $D=dD_1+(e-bf)D_4+fD_6$ and $E=(d-1)D_1+(e-bf+b)D_4+(f-1)D_6$ with $d\geq,e\geq0,f\geq1$. Then $D$ is big, and the configuration $(D,E)$ has connected sections by Lemma \ref{lemma7.2}. For any curve $C$ that not contained in the toric boundary on a very general surface $S$ in $|D|$ we have,
\begin{align*}
    2g-2\geq C.((d+r+a-3)D_1+(e+2b-bf-2)D_4+(f-3)D_6).
\end{align*}
Let $H=D_1+(1-b)D_4+D_6$. If $d\geq4-r-a$, $e-bf\geq3-3b$ and $f\geq 4$, then choose $\epsilon_0=1$. Then we have
\begin{align*}
    2g-2\geq \epsilon_0(C\cdot H).
\end{align*}
By combining Lemma \ref{lemma7.1} and Corollary \ref{corollary111}, we have the results. \\\\

%----------------------------------------------------------------------------------------------------------------------------------------------
\textbf{Case 3.1.1.~~}
Let $D=dD_{v_1}+fD_{u_1}+(e+f)D_{z_1}$ and $E=(d-1)D_{v_1}+fD_{u_1}+(e+f-1)D_{z_1}+(f-1)D_6$ with $d\geq1,e\geq1,f\geq0$.
Then $D$ is big, and the configuration $(D,E)$ has connected sections by Lemma \ref{lemma7.2}.  Applying Theorem \ref{Theorem5.14}, for any curve $C$ not contained in the toric boundary on a very general surface $S$ in $|D|$ we have, 
\begin{align}
    2g-2 \geq C\cdot ((d+b_1-3)D_{v_1}+(f-1)D_{u_1}+(e+f-3)D_{z_1}).
\end{align}
Let $H=D_{v_1}+D_{u_1}+2D_{z_1}$. If $d\geq4,f\geq2,e\geq3$, then we have
\begin{align}\label{equation31}
    2g-2 \geq C\cdot H.
\end{align}
Applying (\ref{equation31}) and Lemma \ref{lemma7.1} on Corollary \ref{corollary111}, we have the results. \\\\

%---------------------------------------------------------------------------------------------------------------------------------------

\textbf{Case 3.1.2.~~}
Let $D=dD_{v_1}+fD_{u_1}+(e+f)D_{z_1}$ and $E=(d-1)D_{v_1}+fD_{u_1}+(e+f-1)D_{z_1}+(f-1)D_6$ with $d\geq1,e\geq1,f\geq0$.
Then $D$ is big, and the configuration $(D,E)$ has connected sections by Lemma \ref{lemma7.2}.
Applying Theorem \ref{Theorem5.14}, for any curve $C$ not contained in the toric boundary on a very general surface $S$ in $|D|$ we have, 
\begin{align}
    2g-2 \geq C\cdot ((d+b_1-3)D_{v_1}+fD_{u_1}+(e+f-3)D_{z_1}).
\end{align}
Let $H=D_{v_1}+D_{u_1}+2D_{z_1}$.
If $d\geq4,e\geq4,f\geq1$, then we have
\begin{align}\label{equation32}
    2g-2 \geq C\cdot H.
\end{align}
Applying (\ref{equation32}) and Lemma \ref{lemma7.1} on Corollary \ref{corollary111}, we have the results.\\\\

%----------------------------------------------------------------------------------------------------
\textbf{Case 3.1.3.~~}
Let $D=dD_{v_1}+fD_{u_1}+(e+f)D_{z_1}$ and $E=(d-1)D_{v_1}+fD_{u_1}+(e+f-1)D_{z_1}+(f-1)D_6$ with $d\geq1,e\geq1,f\geq0$.
Then $D$ is big, and the configuration $(D,E)$ has connected sections by Lemma \ref{lemma7.2}.
Applying Theorem \ref{Theorem5.14}, for any curve $C$ not contained in the toric boundary on a very general surface $S$ in $|D|$ we have, 
\begin{align}
    2g-2 \geq C\cdot ((d+b_1+c_2-2)D_{v_1}+(f-1)D_{u_1}+(e+f-4)D_{z_1}).
\end{align}
Let $H=D_{v_1}+D_{u_1}+2D_{z_1}$. If $d\geq3-b_1-c_2,e\geq4,f\geq2$, then we have
\begin{align}\label{equation33}
    2g-2 \geq C\cdot H.
\end{align}
Applying (\ref{equation33}) and Lemma \ref{lemma7.1} on Corollary \ref{corollary111}, we have the results.\\\\

%------------------------------------------------------------------------------------------------------------------------------------
\textbf{Case 3.1.4.~~}
Let $D=dD_{v_1}+fD_{u_1}+(e+f)D_{z_1}$ and $E=(d-1)D_{v_1}+fD_{u_1}+(e+f-1)D_{z_1}+(f-1)D_6$ with $d\geq1,e\geq1,f\geq0$.
Then $D$ is big, and the configuration $(D,E)$ has connected sections by Lemma \ref{lemma7.2}.
Applying Theorem \ref{Theorem5.14}, for any curve $C$ not contained in the toric boundary on a very general surface $S$ in $|D|$ we have, 
\begin{align}
    2g-2 \geq C\cdot ((d+b_1+b_2)D_{v_1}+(f-3)D_{u_1}+(e+f-4)D_{z_1}.
\end{align}
Let $H=D_{v_1}+D_{u_1}+2D_{z_1}$. If $d\geq1-b_1-b_2,e\geq2,f\geq4$, then we have
\begin{align}\label{equation34}
    2g-2 \geq C\cdot H.
\end{align}
Applying (\ref{equation34}) and Lemma \ref{lemma7.1} on Corollary \ref{corollary111}, we have the results.\\\\

%--------------------------------------------------------------------------------------------------------------------------------------
\textbf{Case 3.1.5.~~~}
Let $D=dD_{v_1}+fD_{u_1}+(e+f)D_{z_1}$ and $E=dD_{v_1}+(f-1)D_{u_1}+(e+f-1)D_{z_1}$ with $d\geq1,e\geq1$ and $f\geq0$.
Then $D$ is big, and it can be shown that the configuration $(D,E)$ has connected sections. 
Applying Theorem \ref{Theorem5.14}, for any curve $C$ not contained in the toric boundary on a very general surface $S$ in $|D|$ we have, 
\begin{align}
    2g-2 \geq C\cdot ((d+b_1-1)D_{v_1}+(f-3)D_{u_1}+(e+f-3)D_{z_1}.
\end{align}
Let $H=D_{v_1}+D_{u_1}+2D_{z_1}$. If $d\geq2,f\geq4,e+f>=5$, then we have
\begin{equation}\label{eq38}
    2g-2 \geq C\cdot H.
\end{equation}
Applying (\ref{eq38}) and Lemma \ref{lemma7.1} on Corollary \ref{corollary111} we have the results.

\end{proof}

\printbibliography
\end{document}